\newcommand{\bbP}{\mathbb{P}}
\newcommand{\bbC}{\mathbb{C}}
\newcommand{\calO}{\mathcal{O}}
\newcommand{\calL}{\mathcal{L}}
\newcommand{\calV}{\mathcal{V}}
\newcommand{\calW}{\mathcal{W}}
\newcommand{\calR}{\mathcal{R}}
\newtheorem{theorem}{Theorem}
\newtheorem{lemma}[theorem]{Lemma}
\newtheorem{proposition}[theorem]{Proposition}
\newtheorem{definition}[theorem]{Definition}
\newtheorem{corollary}[theorem]{Corollary}
\newtheorem{remark}[theorem]{Remark}
\newtheorem{example}[theorem]{Example}
\newcommand{\bthm}{\begin{theorem}}
\newcommand{\ethm}{\end{theorem}}
\newcommand{\blem}{\begin{lemma}}
\newcommand{\elem}{\end{lemma}}
\newcommand{\bprop}{\begin{proposition}}
\newcommand{\eprop}{\end{proposition}}
\newcommand{\bdefn}{\begin{definition}}
\newcommand{\edefn}{\end{definition}}
\newcommand{\brmk}{\begin{remark}}
\newcommand{\ermk}{\end{remark}}
\newcommand{\bcor}{\begin{corollary}}
\newcommand{\ecor}{\end{corollary}}
\newcommand{\beg}{\begin{example}}
\newcommand{\eeg}{\end{example}}
\newcommand{\bitem}{\begin{itemize}}
\newcommand{\eitem}{\end{itemize}}
\begin{document}

\title[Cox rings and Flag Varieties]{Cox rings of rational surfaces and \\flag varieties of $ADE$-types}
\author{Naichung Conan Leung}
\address{The Institute of Mathematical Sciences and Department of Mathematics\\
The Chinese University of Hong Kong\\
Shatin, N.T., Hong Kong.}
\email{leung@math.cuhk.edu.hk}
\author{Jiajin Zhang}
\address{The Department of Mathematics, Sichuan University, Chengdu, 610065, P.R. China}
\email{jjzhang@scu.edu.cn}
\subjclass[2010]{Primary 14J26; Secondary 14M15}

\begin{abstract}
The Cox rings of del Pezzo surfaces are closely related to the Lie groups $E_n$.
In this paper, we generalize the definition of Cox rings to $G$-surfaces defined by us earlier,
where the Lie groups $G=A_n, D_n$ or $E_n$.
We show that the Cox ring of a $G$-surface $S$ is closely related to an irreducible representation $V$ of $G$, and is generated by degree one elements.
The Proj of the Cox ring of $S$ is a sub-variety of the orbit of the highest weight vector in $V$,
and both are closed sub-varieties of $\mathbb{P}(V)$ defined by quadratic equations.
The GIT quotient of the Spec of such a Cox ring by a natural torus action is considered.

\end{abstract}
\maketitle
\sloppy









\section{Introduction}\label{introduction}

This is a continuation of our studies in which flat $G$-bundles over an
elliptic curve are related to rational surfaces $S$ of type $G$, where $G$ is
a Lie group of simply laced type in \cite{LZ1} and non-simply laced type in
\cite{LZ2}. The affine $E_{n}$ case is considered in \cite{LXZ}.
These studies generalize a classical result of Looijenga (\cite{Lo1}, \cite{Lo2}), Friedman-Morgan-Witten (\cite{FMW}), Donagi (\cite{Don})
and so on, about the case of $G=E_n$ and del Pezzo surfaces.

For instance, an $E_{n}$-surface $S$ is simply a blowup of a {\it del Pezzo} surface $X_n$ of
degree $9-n$ at a general point, where $X_n$ is a blowup of $\mathbb{P}^{2}$ at $n$ points in general position.
The del pezzo surface $X_n$ is well-known to be closely linked to
$E_{n}$ (\cite{Dem}, \cite{Ma}). For example, the orthogonal complement of the canonical class $K_{X_n}$ in $H^{2}%
(X_n,\mathbb{Z})$, equipped with the natural intersection product, is the root lattice
of $E_{n}$ (\cite{Dem}, \cite{Ma}), where  we extend the exceptional $E_{n}$-series to $0\leq n\leq8$ by setting $E_{0}%
=0,E_{1}=\mathbb{C},E_{2}=A_{1}\times\mathbb{C}$, $E_{3}=A_{2}\times
A_{1},E_{4}=A_{4}$, and $E_{5}=D_{5}$. Recall that for the del Pezzo surface $X_n$, a curve $l$ is called a
{\it line} if $l^{2}=l\cdot K_{X_n}=-1$ (which is really  of
degree $1$ under the anti-canonical morphism for $n\leq 7$). In \cite{LZ1}, we use
these root lattices and lines to construct an adjoint principal $E_{n}$-bundle
$\mathcal{E}_{n}$ over $X_{n}$ and its representation bundle (that
is, an associated principal $E_{n}$-bundle) $\mathcal{L}_{E_n}$ over
$X_n$ (corresponding to the left-end node in the Dynkin diagram, see
Figure 1).

 In Section 2.1, we describe a $D_{n}$-surface (resp. an $A_{n}$-surface) $S$ as a rational
surface with a fixed ruling $S\rightarrow\mathbb{P}^{1}$ (resp. a fixed birational
morphism $S\rightarrow\mathbb{P}^{2}$).
Note that the description of $A_n$-surfaces is slightly different from the description in \cite{LZ1}, where it is more indirect.
Here we use a more direct description to obtain the same root lattice. The results about $A_n$-surfaces cited from \cite{LZ1} are all about lattice structures and hence keep true.
Similar to the $E_{n}$-surface case, there is an adjoint principal $D_{n}$-bundle $\mathcal{D}_{n}$
(resp. an adjoint principal $A_{n}$-bundle $\mathcal{A}_{n}$) over a $D_{n}$-surface (resp.
an $A_{n}$-surface) and an associated bundle $\mathcal{L}_{D_{n}}$ (resp.
$\mathcal{L}_{A_{n}}$) determined by the lines on this surface. For simplicity, we also use $\mathcal{L}_{G}$ to denote the bundle $\mathcal{L}_{E_{n}}$,
$\mathcal{L}_{D_{n}}$ or $\mathcal{L}_{A_{n}}$, in the context.

Moreover, both the vector space $V=H^{0}(S,\mathcal{L}_{G}) $ and any fiber of the bundle $\mathcal{L}_{G}$ are representations of $G$.
The vector space $V$, or a subspace of it (denoted still by $V$), is just
the corresponding fundamental representation of $G$ determined by the
left-end node $\alpha_{L}=\alpha_{n}$. Thus  we have $G/P\subset\mathbb{P}(V)$, where $P$ is the maximal
parabolic subgroup of $G$ associated with $\alpha_{n}$.

In the classical $G=E_{n}$ case, the representations and the
flag varieties $G/P$ are related to the {\it Cox rings} of the del Pezzo surfaces $X_{n}$.

The notion of Cox rings is introduced by D. Cox in \cite{Cox} and formulated by Hu-Keel in \cite{HK}.
Let $X$ be an algebraic variety. Assume that the Picard group ${\rm Pic}(X)$ is
freely generated by the classes of divisors $D_{0},D_{1},\cdots,D_{r}$. Then
the {\it total homogeneous coordinate ring}, or the {\it Cox ring} of $X$ with respect to this basis is
given by
\[
Cox(X):=\bigoplus_{(m_{0},...,m_{r})\in\mathbb{Z}^{r+1}} H^{0}(X,\mathcal{O}%
_{X}(m_{0}D_{0} +\cdots+m_{r}D_{r})),
\]
with multiplication induced by the multiplication of functions in the function
field of $X$. Different choices of bases yield (non-canonically) isomorphic Cox rings.

The Cox ring of $X$ is naturally graded by ${\rm Pic}(X)$. Moreover, in the two-dimensional case, it is also
graded by $deg(D):=(-K_{X})D$, where $-K_{X}$ is the anti-canonical class of $X$.

In \cite{Cox}, it is shown that for a toric variety $X$, $Cox(X)$
is a polynomial ring with generators $t_{E}$, where $E$ runs over
the irreducible components of the boundary $X\setminus U$ and $U$ is
the open torus orbit. For a smooth del Pezzo surface $X_n$ of degree at most $6$, $Cox(X_n)$ is finitely generated by
sections of degree one elements (which are sections of $-1$ curves
for $n\leq7$; and in the $X_{8}$ case, sections of $-1$ curves and two
linearly independent sections of $-K_{X_8}$), and these generators
satisfy a collection of quadratic relations (see \cite{BP}, \cite{Der}, \cite{LV}, \cite{TVV} etc).
Thus in particular, a smooth del Pezzo surface is a Mori Dream Space
in the sense of Hu-Keel  (\cite{HK}), and as a result, the GIT quotient of
$Spec(Cox(X_n))$ by the action of the N\'{e}ron-Severi torus $T_{NS}$ of $X_n$ is isomorphic to
$X_n$ (\cite{HK}).

The Cox rings of del Pezzo surfaces are closely related to universal torsors and homogeneous
varieties (see for example \cite{Der}, \cite{HT}, \cite{SS}, \cite{SS2} etc).
For the Lie group $G=E_{n}$ with $4\leq n\leq8$, it is shown that there are the following two successive embeddings

\[
Proj(Cox(X_{n}))\hookrightarrow G/P\hookrightarrow\mathbb{P}(V),
\]
where $V$ is the fundamental representation associated with the left-end node
$\alpha_{L}$ in the Dynkin diagrams (see Figure 1,2,3), and the $Proj$ is considered with respect to the anti-canonical grading.

Motivated from above, we want to give a geometric description of
above results in terms of the representation bundle $\mathcal{L}_{G}$
and also generalize these results to all $ADE$ cases. In this paper, we show
how  the Lie groups, the representations and the flag varieties are
tied together with the rational surfaces.

For this, let $S$ be a {\it $G$-surface} (Definition~\ref{ADE-surface}) with $G$ a
simple Lie group of  simply laced type. Let $\mathcal{L}_G$ be the fundamental
representation bundle over $S$ determined by {\it lines}. Let
$\mathcal{W}$ be the fundamental representation bundles determined by {\it
rulings} (see Section~\ref{quadratic-form}). Let ${Sym}^2\calL_G$ be the second symmetric
power of $\calL_G$. Let $P$ be the maximal parabolic subgroup of $G$
associated with $\calL_G$.

Our main results are the following:

\begin{theorem}{\rm(Theorem~\ref{mainThm1})}
Let $S$, $G$, $\mathcal{L}_G$ and $\mathcal{W}$ be as above.
There is a canonical fiberwise quadratic form $\mathcal{Q}$ on $\mathcal{L}_G$,%
\[
\mathcal{Q}: \mathcal{L}_G\rightarrow {Sym}^2\mathcal{L}_G\rightarrow\mathcal{W}\text{,}%
\]
such that $ker(\mathcal{Q})  \subset\mathbb{P}\left(  \mathcal{L}_G%
\right)  $ is a fiber bundle over $S$ with fiber being the
homogeneous variety $G/P$, where $ker(\mathcal{Q})$ is the
subscheme of $\bbP(\calL_G)$ defined by $x\in \bbP(\calL_G)$, such that
$\mathcal{Q}(x)=0$.

Moreover, by taking global sections, we realize $G/P$ as a subvariety of
$\mathbb{P}(H^{0}(S,\mathcal{L}_G))$ cut out by quadratic equations, for $G\neq E_8$.
For $G=E_8$, we should replace $H^{0}(S,\mathcal{L}_G)$ by a subspace $V$ of dimension $248$.

\end{theorem}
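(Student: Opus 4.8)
The plan is to reduce the theorem to a single representation-theoretic fact about one fiber, settle that fiber using Kostant's theorem on highest weight orbits, and then globalize by taking sections. First I would fix a point $s\in S$ and recall from the construction in Section~\ref{quadratic-form} that the fiber $\calL_G|_s$ is canonically the fundamental representation $V=V_\lambda$ attached to the left-end node $\alpha_L$, while $\calW|_s$ is the representation determined by rulings. The decisive input is the decomposition of the symmetric square as a $G$-module,
\[
{\rm Sym}^2 V = V_{2\lambda}\oplus W,
\]
in which the Cartan component $V_{2\lambda}$ occurs with multiplicity one and the complementary summand $W$ is identified with $\calW|_s$. I would then define $\calQ$ fiberwise as the squaring map $x\mapsto x\cdot x$ into ${\rm Sym}^2\calL_G$ followed by the $G$-equivariant projection onto $\calW$; since the multiplicity-one statement makes this projection canonical and both maps are natural, $\calQ$ is a genuine morphism of bundles over $S$, as in the displayed factorization.

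The heart of the fiberwise claim is Kostant's theorem: the highest weight orbit closure $G/P\subset\bbP(V)$ is cut out scheme-theoretically by the quadrics spanning the annihilator of $V_{2\lambda}$ inside ${\rm Sym}^2 V^{*}$. Equivalently, $[x]\in\bbP(V)$ lies on $G/P$ if and only if $x\cdot x\in V_{2\lambda}$, that is, if and only if $\calQ(x)=0$, since for any $g\in G$ one has $(g\cdot v_\lambda)^2=g\cdot v_{2\lambda}\in V_{2\lambda}$ on the cone over the orbit. This identifies $\ker(\calQ)|_s$ with $G/P$ in every fiber. Because $\calL_G$, $\calW$ and the bundle map $\calQ$ are defined globally over $S$ and $G$ preserves $G/P\subset\bbP(V)$, the subscheme $\ker(\calQ)\subset\bbP(\calL_G)$ is a fiber bundle over $S$ with fiber $G/P$, which is the first assertion.

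To obtain the embedding into $\bbP(H^0(S,\calL_G))$ I would compute global sections and show $H^0(S,\calL_G)\cong V$ as $G$-modules when $G\neq E_8$; then evaluation identifies $\bbP(H^0(S,\calL_G))$ with a single fiber $\bbP(V)$, transporting $G/P$ there, and the fiberwise quadrics assemble through $H^0(\calQ)\colon{\rm Sym}^2 H^0(S,\calL_G)\to H^0(S,\calW)$ into the global quadratic equations. The main obstacle is exactly the $E_8$ case. There the relevant node yields the adjoint representation of dimension $248$, so $G/P$ is the adjoint variety (still cut out by quadrics by Kostant), but $H^0(S,\calL_{E_8})$ is strictly larger than $248$; the naive identification therefore fails, and one must single out the correct $248$-dimensional $G$-submodule $V\subset H^0(S,\calL_{E_8})$ and check that the quadratic description survives restriction to $\bbP(V)$ and that the projection to $\calW$ is still the Kostant projection after this restriction. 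Controlling these extra sections is where the genuine difficulty lies.
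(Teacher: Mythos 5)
Your fiberwise argument is essentially identical to the paper's: the same multiplicity-one decomposition ${\rm Sym}^2 V_{\lambda}=V_{2\lambda}\oplus W$ with the canonical projection defining $\mathcal{Q}$, and the same key input that the cone over the highest weight orbit is the scheme-theoretic zero locus of ${\rm pr}\circ{\rm Ver}$ (you cite this as Kostant's theorem; the paper cites Lichtenstein \cite{Li} and \cite{BP}, Proposition 4.2 — the same fact). The paper additionally has to establish the bundle-level decomposition ${\rm Sym}^2\calL_G=\calV_{2l_n}\oplus\calW$ and identify $\calW$ concretely ($\calR_{E_n}$ for $4\leq n\leq 7$, $\calR_{E_8}\oplus\calO_S(-2K_S)$ for $E_8$, $\calO_S(f)$ for $D_n$, $0$ for $A_{n-1}$) by weight/multiplicity counting with LiE; your appeal to canonicity of the isotypic projection is an acceptable substitute for that bookkeeping. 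So far, same route.

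The genuine gap is in the ``moreover'' clause, which you do not finish. For $G\neq E_8$ you assert $H^0(S,\calL_G)\cong V$ but omit the one fact that makes it true: by \cite{LZ1} every line class $\mu\in I(S,C)$ is represented by a unique irreducible curve, so $\dim H^0(S,\calO_S(\mu))=1$; choosing the sections $s_\mu$ gives coordinates $x_\mu$ on $H^0(S,\calL_G)$ in which the very same quadratic polynomials $f(x_\mu)$ cut out $G/P$ — no ``evaluation at a fiber'' argument is needed (and evaluation at a point is not an isomorphism at points lying on the lines). More seriously, for $E_8$ you stop at ``controlling these extra sections is where the genuine difficulty lies,'' i.e.\ you leave unproved a case that the statement explicitly includes. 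The paper's resolution is short and you should supply it: since $\calL_{E_8}=\calO_S(-K_S)^{\oplus 8}\oplus\bigoplus_{\mu\in I(S,C)}\calO_S(\mu)$ and $\dim H^0(S,\calO_S(-K_S))=2$, fix one nonzero anticanonical section $s_K$ and set $V=\mathbb{C}\langle s_K\rangle^{\oplus 8}\oplus\bigoplus_{\mu\in I(S,C)}H^0(S,\calO_S(\mu))$, of dimension $8+240=248$. This $V$ is by construction a coordinate-by-coordinate copy of the fiber $V_{l_8}$ (the eight copies of $s_K$ spanning the image of the zero-weight space), so the same quadrics $f(x_\mu|_{\mu\in\Pi(l_8)})$ define $G/P$ in $\bbP(V)$; there is no remaining issue of the Kostant projection ``surviving restriction,'' because the subspace is chosen so that restriction is an isomorphism onto the fiber, not a genuinely smaller locus inside it.
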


We have a uniform definition for an $ADE$-surface in \cite{LZ1} (see also Section~\ref{section-surface}). Using this
definition, we can give a uniform definition of the Cox ring of a $G$-surface
$S$ (Definition~\ref{Cox-def}), where $G$ is the $ADE$ Lie group. For $G=E_{n}$,
it turns out that the Cox ring of an $E_n$-surface $S$ is the same as the Cox ring of a del Pezzo surface $X_{n}$ of degree $9-n$.
Let $Cox(S,G)$ be the Cox ring of a $G$-surface $S$. Let $T_{G}\subseteq P$ be the maximal subtorus of $G$,
and $T_{S,G}$ be the torus defined in Section~\ref{section-GIT}.

\begin{theorem}{\rm(Theorem~\ref{mainThm2},  ~\ref{mainThm3},  ~\ref{mainThm4} and Proposition~\ref{equivariant}, ~\ref{GIT-quotient})}

(1) The Cox ring of an $ADE$-surface $S$ is generated by degree $1$ elements,
and the ideal of relations between the degree $1$ generators  is generated by quadrics.

(2) We have $\mathbb{C}^*\times T_G$-equivariant embeddings:
\[
Spec(Cox(S,G))\hookrightarrow C(G/P)\hookrightarrow H^{0}(S,\mathcal{L}_G).
\]
Taking the $Proj$, we have $T_G$-equivariant embeddings:
\[
Proj(Cox(S,G))\hookrightarrow G/P\hookrightarrow\mathbb{P}(H^{0}%
(S,\mathcal{L}_G)).
\]
Both of the first two spaces are embedded into the last space as sub-varieties
defined by quadratic equations.

(3) The GIT quotient of $Spec(Cox(S,G))$ by the action of the torus $T_{S,G}$ is respectively $X_n$
for $G=E_n$, $\bbP^1$ for $G=D_n$, and a point for $G=A_n$.
\end{theorem}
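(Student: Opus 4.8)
The plan is to prove the three assertions of the combined theorem in sequence, reducing each to the fiberwise geometric picture established in the first main theorem (Theorem~\ref{mainThm1}), which identifies $\ker(\mathcal{Q})\subset\bbP(\calL_G)$ as a $G/P$-bundle over $S$ and realizes $G/P$ inside $\bbP(H^0(S,\calL_G))$ as a quadratically-cut subvariety. The key unifying input is that $\calL_G$ is the fundamental representation bundle attached to the left-end node, so that line bundles over $S$ organized by the line classes furnish the degree-one piece of the Cox ring, and the fiber representation $V$ of $G$ controls the multiplicative structure.

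\textbf{Part (1): generation and quadratic relations.} First I would show that $Cox(S,G)$ is generated in degree $1$. The natural candidate generators are the sections corresponding to the $(-1)$-curves (the \emph{lines}), which are precisely the weights of the fundamental representation $V$ cut out by $\calL_G$; one identifies $H^0(S,\calL_G)$ (or its $248$-dimensional subspace $V$ in the $E_8$ case) with the span of these degree-one generators. To prove surjectivity of the multiplication map from symmetric powers of the degree-one part onto each graded piece, I would argue divisor-class by divisor-class using the $ADE$-surface structure: an effective anticanonical-positive class decomposes as a sum of line classes up to the base-point-free linear systems coming from rulings, and vanishing/surjectivity of the relevant restriction maps follows from the rationality of $S$ together with standard exact-sequence arguments (Riemann--Roch and the fact that higher cohomology of the relevant line bundles vanishes). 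For the quadratic relations, the crucial point is that the ideal of $G/P\subset\bbP(V)$ is generated by quadrics — which is exactly the content of Theorem~\ref{mainThm1} via the quadratic form $\mathcal{Q}$ — and I would show that the relations among the degree-one generators of $Cox(S,G)$ are pulled back from, and in fact coincide with, these Plücker-type quadrics on $G/P$, the fiberwise statement globalizing by taking sections over $S$.

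\textbf{Part (2): the equivariant embeddings.} Here the strategy is to promote the fiberwise embedding $\ker(\mathcal{Q})\hookrightarrow \bbP(\calL_G)$ and its cone version to the level of total coordinate rings. Taking global sections gives $Proj(Cox(S,G))\hookrightarrow G/P\hookrightarrow \bbP(H^0(S,\calL_G))$, where the first inclusion records that the surface's Cox ring is carved out inside the homogeneous coordinate ring of $G/P$ by the additional relations coming from the specific point of the $G/P$-bundle (equivalently, from the grading refinement by the full $\mathrm{Pic}(S)$ rather than just the anticanonical degree). The cone-level statement $Spec(Cox(S,G))\hookrightarrow C(G/P)\hookrightarrow H^0(S,\calL_G)$ follows by removing the $Proj$, and equivariance is where I would be careful: the $\bbC^*$ records the anticanonical grading while $T_G\subseteq P$ acts through its action on $V$, and one checks that the embedding intertwines these actions because the generators of $Cox(S,G)$ are weight vectors for $T_G$ (each line class is a $T_G$-weight) and homogeneous of degree $1$ for the $\bbC^*$.

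\textbf{Part (3): the GIT quotients.} Finally, for the quotient by $T_{S,G}$ I would appeal to the general Cox-theoretic principle (Hu--Keel) that the GIT quotient of $Spec(Cox)$ by the N\'eron--Severi torus recovers the variety itself, now adapted so that $T_{S,G}$ is the subtorus acting along the fibers/lines rather than the full Picard torus. Concretely, quotienting by $T_{S,G}$ collapses the line-class directions, leaving exactly the data not rigidified by those classes: for $G=E_n$ this is the del Pezzo surface $X_n$ (the classical Mori Dream Space statement), for $G=D_n$ the residual ruling $S\to\bbP^1$ survives and yields $\bbP^1$, and for $G=A_n$ the birational morphism $S\to\bbP^2$ leaves no moduli, giving a point. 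The \textbf{main obstacle} I anticipate is Part (1), specifically proving generation in degree one uniformly across all $ADE$ types and handling the exceptional $E_8$ case where $H^0(S,\calL_G)$ must be truncated to the $248$-dimensional subspace $V$; the quadratic-relations half, by contrast, should follow cleanly once Theorem~\ref{mainThm1} is in hand, since it hands us the quadrics cutting out $G/P$ directly.
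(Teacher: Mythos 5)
Your overall architecture is sensible, and several pieces do match the paper: generation in degree one for $D_n$ and $A_n$ via effective-class decomposition and exact-sequence/vanishing arguments is exactly how the paper proceeds, your equivariance argument (line classes are $T_G$-weight vectors, the $\mathbb{C}^*$ is the anticanonical grading) is the paper's Proposition~\ref{equivariant}, and for $G=E_n$ the GIT statement is indeed a direct application of Hu--Keel, as in Proposition~\ref{GIT-quotient}. But there is a genuine gap at the heart of your Part (1): you claim the relations among the degree-one generators ``are pulled back from, and in fact coincide with'' the quadrics cutting out $G/P$, so that quadratic generation of $\mathcal{I}(S,G)$ ``follows cleanly'' from Theorem~\ref{mainThm1}. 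This is false, and the containment runs in the opposite direction. Already for $G=D_n$ the affine cone $C(D_n/P)$ is the single quadric $\sum_{i=1}^n x_iy_i=0$ (the paper's Lemma~\ref{quadric-equation}), whereas $\mathcal{I}(S,D_n)$ requires $n-2$ independent quadrics $a_{i1}x_1y_1+a_{i2}x_2y_2+a_{i3}x_iy_i$; the relations $a_{31}x_1y_1+a_{32}x_2y_2+a_{33}x_3y_3$ are not in the ideal of $G/P$. What is true---and what the paper actually proves to get Theorem~\ref{mainThm4}---is that a suitable linear combination $\sum_i b_i(a_{i1}x_1y_1+a_{i2}x_2y_2+a_{i3}x_iy_i)$ recovers a quadric of the form $\sum_i c_ix_iy_i$ with all $c_i\neq 0$, so the coordinate ring of $C(G/P)$ surjects onto $Cox(S,G)$, i.e., $Spec(Cox(S,G))$ is a \emph{proper} closed subvariety of $C(G/P)$ cut out by extra quadrics. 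Since quadratic generation of the ideal of $G/P$ says nothing about ideals strictly containing it, your mechanism cannot deliver Theorem~\ref{mainThm3}.

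The same misconception makes your treatment of $E_n$ untenable: quadratic generation of $\mathcal{I}(S,E_n)$ is precisely the Batyrev--Popov conjecture, proved by Derenthal for $n\leq 7$ (with computer assistance for $E_6,E_7$) and by Serganova--Skorobogatov and Testa--V\'arilly-Alvarado--Velasco for $E_8$; the paper explicitly cites these results without reproving them and even remarks that simplifying Derenthal's proof is an open question, so no ``standard exact-sequence argument'' plus Theorem~\ref{mainThm1} will yield it. Relatedly, degree-one generation for $E_n$ (including the $E_8$ subtlety of adjoining two sections of $-K$) is also imported from \cite{BP}, \cite{Der}, \cite{TVV} rather than derived from the bundle picture. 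Finally, in Part (3) note that $T_{S,G}$ is a proper subtorus of the N\'eron--Severi torus for $G=D_n,A_n$, so Hu--Keel does not apply verbatim; the paper instead computes the invariant ring for an explicit linearization (e.g., $v=s$, giving $\bigoplus_{m\geq 0}H^0(S,\calO_S(mf))$ and hence $\mathbb{P}^1$ for $D_n$), a precise step your heuristic that ``the residual ruling survives'' would need to be replaced by.
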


Thus, we have a uniform description for Cox rings of $ADE$-surfaces and their relations to configurations of curves, representation theory and flag varieties,
as is the purpose of this paper.

Note that in the $E_6$ and $E_7$ cases, the proof of the embedding $Proj(Cox(S,G))\hookrightarrow G/P$ was achieved by Derenthal in \cite{Der} with the help of a computer program.
Trying to simplify this proof is also a very interesting question.
For $G=E_8$, the embedding was proved by Serganova and Skorobogatov (\cite{SS2}). These results about $E_n(4\leq n\leq 8)$ answer a conjecture of Batyrev and Popov (\cite{BP}).
Here we just cite their results without new proofs.

{\bf Acknowledgements}. We would like to thank the
referees very much for their very careful reading and very
instructive suggestions which make this paper much more self-contained and improve this paper greatly.
The work of the first author was partially
supported by grants from the Research Grants Council of the Hong Kong
Special Administrative Region, China (Project No. CUHK401411).


\section{$ADE$-surfaces and associated principal  $G$-bundles} \label{section-surface}

Let $G=A_n, D_n$, or $E_n$ be a complex (semi-)simple Lie groups.
In this section, we first briefly recall the definitions and constructions of $G$-surfaces and associated principal $G$-bundles from \cite{LZ1}.
After that, we study the quadratic forms defined fiberwise over these associated principal $G$-bundles.
\subsection{$ADE$-surfaces}\label{subsection-surface}

The definition of $ADE$-surfaces is motivated from the classical del Pezzo surfaces (\cite{LZ1}).
According to the results of \cite{Ma} and \cite{LZ1}, over a del Pezzo surface $X_n(0\leq n\leq 8)$ of degree $9-n$, there
is a root lattice structure of the Lie group $E_n$, and the lines and the rulings in $X_n$ can be related to
the fundamental representations associated with the endpoints of the Dynkin diagram, via a natural way.
Inspired by these, we can consider general $G$-surfaces, where $G=A_n, D_n$, or $E_n$.

When the simply laced Lie group $G$ is simple, that is, $G=E_{n}\mbox{ for } 4\leq n\leq8$, $A_{n}\mbox{ for }n\geq1$, or
$D_{n}\mbox{ for }n\geq3$, we gave a uniform definition of $ADE$-surfaces in \cite{LZ1},
using the pair $(S,C)$. It turns out that when $G=E_n$, after blowing down an exceptional curve, we obtain the classical del Pezzo surfaces $X_n$.

{\bf Notations}.
Let $h$ be the (divisor, the same below) class of a line in $\mathbb{P}^2$. Fix a rulled surface structure of $\mathbb{P}^1\times\mathbb{P}^1$ or $\mathbb{F}_1$ over $\mathbb{P}^1$, and
 let $f,s$ be the classes of a fiber and a section in the natural projection from $\mathbb{P}^1\times\mathbb{P}^1$ or $\mathbb{F}_1$ to $\mathbb{P}^1$.
 If $S$ is a blowup of one of these surfaces, then we use the same notations to denote the pullback class of $h,f,s$, and use $l_i$ to denote the exceptional class
 corresponding to the blowup at a point $x_i$. Let $K_S$ be the canonical class of $S$. Since for $S$ the Picard group and the divisor class group are isomorphic,
 we use $Pic(S)$ to denote the divisor class group of $S$.
 The Picard group ${\rm Pic}(S)$ is generated by $h,l_1,\cdots,l_n$
 or by $f,s,l_1,\cdots,l_n$ respectively.

\begin{definition}\label{ADE-surface}
Let $(S,C)$ be a pair consisting of a smooth rational
surface $S$  and a smooth rational curve $C\subset S$ with $C^{2}\neq4$. The
pair $(S,C)$ is called  an $ADE$-surface, or a $G$-surface for the Lie group $G=A_n, D_n$ or $E_n$ if it satisfies
the  following two conditions:

(i) any rational curve on $S$ has a  self-intersection number at
least $-1$;

(ii) the sub-lattice $\langle K_{S},C\rangle^{\perp}$ of ${\rm Pic}(S)$ is an
irreducible root lattice  of rank equal to $r-2$, where $r$ is the rank of
${\rm Pic}(S)$.
\end{definition}

The following proposition shows that such surfaces can be classified into
three types, and the curve $C$ in fact sits in the negative part of the Mori cone.

\begin{proposition}{\rm (\cite{LZ1}, Proposition 2.6)}\label{Classification-ADE-surfaces}
Let $(S,C)$ be an $ADE$-surface. Let
$n=rank({\rm Pic}(S))-2$. Then $C^{2}\in\{-1,0,1\}$ and

(i) when $C^{2}=-1$, $\langle K_{S},C\rangle^{\perp}$ is of $E_{n}$-type,
where $4\leq n\leq8$;

(ii) when $C^{2}=0$, $\langle K_{S},C\rangle^{\perp}$ is of $D_{n}$-type,
where $n\geq3$;

(iii) when $C^{2}=1$, $\langle K_{S},C\rangle^{\perp}$ is of $A_{n}$-type.
\end{proposition}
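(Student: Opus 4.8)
The plan is to work entirely inside the lattice $\mathrm{Pic}(S)$ with its intersection form, using the two defining conditions together with the fact that $S$ is a smooth rational surface, so that the intersection form has signature $(1,r-1)$ and $K_S^2 = 9 - (r-1) = 10 - r$ by Noether's formula. First I would extract numerical constraints on $C$. Since $C$ is a smooth rational curve, the adjunction formula gives $C^2 + K_S\cdot C = -2$, so $K_S\cdot C = -2 - C^2$. Condition (i) forces $C^2 \geq -1$, and the hypothesis excludes $C^2 = 4$; the main task is therefore to show $C^2 \leq 1$, i.e. to rule out $C^2 \geq 2$ (with $C^2 = 4$ already excluded) using condition (ii).

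The key idea is to analyze the lattice $M := \langle K_S, C\rangle^{\perp}$ and its orthogonal complement. The sublattice $N := \langle K_S, C\rangle$ spanned by $K_S$ and $C$ has Gram matrix
\[
\begin{pmatrix} K_S^2 & K_S\cdot C \\ K_S\cdot C & C^2 \end{pmatrix}
= \begin{pmatrix} 10-r & -2-C^2 \\ -2-C^2 & C^2 \end{pmatrix},
\]
with determinant $(10-r)C^2 - (2+C^2)^2$. Because the whole lattice has signature $(1,r-1)$ and $M = N^\perp$ is required by (ii) to be a \emph{negative-definite} irreducible root lattice of rank $r-2$, the complementary rank-$2$ lattice $N$ must carry the remaining signature, namely $(1,1)$; in particular $N$ must be nondegenerate of signature $(1,1)$, so its determinant must be negative. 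I would first check that this sign condition, $(10-r)C^2 - (2+C^2)^2 < 0$, together with condition (i), already pins down the possible values of $C^2$. Running through the small cases: $C^2 = -1$ gives $K_S\cdot C = -1$ so that $C$ is a $(-1)$-curve, $C^2 = 0$ gives a ruling-type class, and $C^2 = 1$ gives $K_S\cdot C = -3$; each of these is realizable, while the signature/root-lattice constraint excludes $C^2 \geq 2$. The precise exclusion argument combines the determinant sign with the requirement that $M$ be an honest root lattice (even, generated by $(-2)$-vectors) rather than merely negative definite.

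Having fixed $C^2 \in \{-1,0,1\}$, the three cases are then identified by computing $M = \langle K_S, C\rangle^\perp$ explicitly in each case and recognizing the resulting rank-$(r-2)$ root lattice by its Dynkin type. For $C^2 = -1$, one recovers the classical situation: blowing down $C$ produces a del Pezzo surface $X_n$ of degree $9-n$ whose $K^\perp$ is the $E_n$ root lattice, and the orthogonality to $C$ cuts this down appropriately, yielding $E_n$ with $4 \leq n \leq 8$ (the lower bound $n\geq 4$ coming from irreducibility of the root system, since $E_3 = A_2\times A_1$ and below are not irreducible). For $C^2 = 0$ one identifies the complement as a $D_n$ root lattice using the ruling $f = C$ and the standard computation that $\langle f, K_S\rangle^\perp$ in a blown-up ruled surface is of type $D_n$, and similarly $C^2 = 1$ yields $A_n$. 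The main obstacle I anticipate is the exclusion step: showing rigorously that no class $C$ with $C^2 \geq 2$ (other than the excluded $C^2=4$) can have $\langle K_S, C\rangle^\perp$ equal to an irreducible root lattice of the correct rank. This requires more than the crude signature count, because negative-definiteness alone does not force the root-lattice structure; one must use that $M$ is generated by its $(-2)$-classes and that, by condition (i), every effective $(-2)$-class is represented by an actual rational curve, to derive a contradiction with $C^2$ too large. This is where I would expect to invoke the detailed geometry of $S$ (the concrete generators $h, l_i$ or $f, s, l_i$ of $\mathrm{Pic}(S)$) rather than pure lattice theory.
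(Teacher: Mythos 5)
The paper offers no proof to compare against: Proposition~\ref{Classification-ADE-surfaces} is quoted verbatim from \cite{LZ1} (Proposition 2.6), so your proposal must stand on its own merits, and it has a genuine gap at precisely the point you flag yourself: the exclusion of $C^2\geq 2$. Your determinant criterion $(10-r)C^2-(2+C^2)^2<0$ is correct and does real work in one place (for $C^2=-1$ it gives $r-11<0$, hence the bound $n\leq 8$ in the $E_n$ case), but it provably cannot exclude $C^2=2$ or $C^2=3$: there the determinant equals $4-2r$, respectively $5-3r$, which is negative for every relevant $r$, so $N=\langle K_S,C\rangle$ has signature $(1,1)$ and the perp is negative definite. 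Moreover, evenness of $M=\langle K_S,C\rangle^{\perp}$ is automatic and carries no content, since $K_S$ is a characteristic element of the unimodular lattice $\mathrm{Pic}(S)$, so $x^2\equiv K_S\cdot x=0 \pmod 2$ for all $x\in M$. The entire burden therefore falls on the two conditions you correctly name but never verify: that $M$ be \emph{generated} by its $(-2)$-classes, and that the resulting root system be \emph{irreducible}. A purely lattice-theoretic exclusion is in fact doomed in principle: for $C\equiv 2h$ (a smooth conic, $C^2=4$) one has $\langle K_S,2h\rangle^{\perp}=\langle K_S,h\rangle^{\perp}$, an honest irreducible $A$-type root lattice of the right rank --- this is exactly why the hypothesis $C^2\neq 4$ is imposed --- so any correct argument must use the geometry of the curve class, as you anticipate, yet none is supplied.

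To see what the missing step actually requires: take $S$ a blowup of $\mathbb{P}^2$ at $n$ points and $C\equiv 2h-l_1$ (a smooth rational curve with $C^2=3$). A class $D=ah+\sum b_il_i$ in $M$ with $D^2=-2$ forces $a=0$ (the case $a=\pm 1$ leads to $\sum_{i\geq 2}b_i^2=-1$, and $|a|\geq 2$ gives $D^2\leq -12$), so the only roots are $l_i-l_j$ with $i,j\geq 2$; these span a sublattice of corank $1$ in $M$, so $M$ is not a root lattice and (ii) fails --- but for a reason invisible to the determinant. Similarly, $C\equiv 2h-l_1-l_2$ ($C^2=2$) yields the roots $\pm(l_1-l_2)$, $l_i-l_j$ ($i,j\geq 3$), $\pm(h-l_1-l_2-l_i)$ ($i\geq 3$), in which $l_1-l_2$ is orthogonal to all the others, so the system is reducible and (ii) again fails; and each value $C^2\geq 5$ needs its own elimination. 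Carrying this analysis out for \emph{all} classes of smooth rational curves with the given self-intersection, on all surfaces permitted by condition (i) --- not just one convenient representative per value of $C^2$ --- is the substance of the proposition, and it is absent from your write-up. By contrast, the identification half (matching $C^2=-1,0,1$ with $E_n$, $D_n$, $A_n$, with $n\geq 4$, resp.\ $n\geq 3$, forced by irreducibility since $E_3=A_2\times A_1$ and $D_2=A_1\times A_1$) is essentially right as sketched.
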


In the following corollary, $n$ points on $\mathbb{P}^2$ or $\mathbb{P}^1\times\mathbb{P}^1$ or $\mathbb{F}_1$ are said to be {\it in general position},
if the surface obtained by blowing up these points contains no irreducible rational curves with self-intersection number less than or equal to $-2$.

\begin{corollary}\label{surface-description}
Let $(S,C)$ be an $ADE$-surface.

(i) In the $E_{n}$ case, blowing down the $(-1)$ curve $C$ of $S$, we obtain a del Pezzo
surface $X_n$ of degree $9-n$.

(ii) In the $D_{n}$ case, $S$ is just a blowup of $\mathbb{P}^{1}\times
\mathbb{P}^{1}$ or $\mathbb{F}_{1}$ at $n$ points in general position with $C$
as the natural ruling.

(iii) In the $A_{n}$ case, the linear system $|C|$ defines a birational
map $\varphi_{_{|C|}}:S\rightarrow\mathbb{P}^{2}$. Therefore $S$ is
just the blowup of $\mathbb{P}^{2}$ at $n+1$ points in general position, and $C$ is a smooth curve which represents the class
determined by lines in $\mathbb{P}^{2}$.
\end{corollary}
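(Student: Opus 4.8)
The plan is to run through the three cases of Proposition~\ref{Classification-ADE-surfaces} according to the value of $C^{2}$, in each case using adjunction to fix $K_{S}\cdot C$ and then the birational geometry of rational surfaces, constrained by condition (i), to read off an explicit model. Since $C$ is a smooth rational curve, adjunction $C^{2}+K_{S}\cdot C=-2$ gives $K_{S}\cdot C=-1,-2,-3$ when $C^{2}=-1,0,1$ respectively, and throughout I write $n=\mathrm{rank}\,\mathrm{Pic}(S)-2$.

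In the $E_{n}$ case ($C^{2}=-1$) the curve $C$ is a $(-1)$-curve, so by Castelnuovo's criterion I would contract it by a morphism $\pi\colon S\to X_{n}$ onto a smooth rational surface with $\mathrm{rank}\,\mathrm{Pic}(X_{n})=n+1$ and $K_{S}=\pi^{*}K_{X_{n}}+C$. First I would transport condition (i) across $\pi$: if $D\subset X_{n}$ is an irreducible curve of multiplicity $m$ at $\pi(C)$, then $D^{2}=\widetilde{D}^{2}+m^{2}\ge\widetilde{D}^{2}\ge-1$, so $X_{n}$ carries no curve of self-intersection $\le-2$. For the degree I would use that $\mathrm{Pic}(X_{n})$ is unimodular of signature $(1,n)$ and that $\pi^{*}$ identifies the root lattice $\langle K_{S},C\rangle^{\perp}$ with $\langle K_{X_{n}}\rangle^{\perp}$; being the $E_{n}$ lattice, this has $|\mathrm{disc}\langle K_{X_{n}}\rangle|=|\mathrm{disc}\,E_{n}|=9-n$, and with $K_{X_{n}}$ primitive this forces $K_{X_{n}}^{2}=9-n$. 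The last point, and the main obstacle here, is to upgrade ``no $(-2)$-curve'' to ampleness of $-K_{X_{n}}$: one shows $-K_{X_{n}}$ is nef and big and then invokes the standard fact that a weak del Pezzo surface with no $(-2)$-curve is del Pezzo. Equivalently, once $X_{n}$ is known to be del Pezzo with $\mathrm{rank}\,\mathrm{Pic}=n+1$, the classification of del Pezzo surfaces already forces the degree to be $9-n$.

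For the $D_{n}$ and $A_{n}$ cases I would realize the asserted morphisms through the linear system $|C|$. When $C^{2}=0$, Riemann--Roch together with $h^{2}(\mathcal{O}_{S}(C))=h^{0}(K_{S}-C)=0$ gives $h^{0}(\mathcal{O}_{S}(C))\ge2$, and since $C^{2}=0$ the pencil $|C|$ is base-point-free and defines a ruling $S\to\mathbb{P}^{1}$ with general fibre $C$. Contracting the $(-1)$-curves lying in fibres presents $S$ as a blow-up of a Hirzebruch surface $\mathbb{F}_{e}$; condition (i) forbids the negative section of self-intersection $-e$ once $e\ge2$, so $e\in\{0,1\}$, i.e. $\mathbb{P}^{1}\times\mathbb{P}^{1}$ or $\mathbb{F}_{1}$, and the rank count leaves $n$ blown-up points with $C$ the ruling class. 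When $C^{2}=1$, the analogous computation (using a vanishing theorem for $h^{1}$) gives $h^{0}(\mathcal{O}_{S}(C))=3$ and a base-point-free morphism $\varphi_{|C|}\colon S\to\mathbb{P}^{2}$; the identity $C^{2}=\deg\varphi_{|C|}=1$ shows $\varphi_{|C|}$ is birational, so $S$ is a blow-up of $\mathbb{P}^{2}$ at $n+1$ points and $C$ is the pullback of the line class $h$.

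In both of the latter cases the phrase ``general position'' is essentially automatic, since by definition it means precisely that the blown-up surface carries no irreducible rational curve of self-intersection $\le-2$, which is exactly condition (i) for $S$; in particular this also excludes infinitely near points. Thus the steps that need genuine work are the vanishing $h^{1}(\mathcal{O}_{S}(C))=0$ and the base-point-freeness of $|C|$ in cases (ii)--(iii), and the ampleness of $-K_{X_{n}}$ in case (i). I expect the last to be the principal obstacle, since the nef-and-big property of the anticanonical class has to be extracted from the negativity bound in condition (i) rather than assumed at the outset.
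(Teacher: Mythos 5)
Your proposal cannot be compared line-by-line with an argument in this paper, because the paper contains none: the corollary is stated as an immediate consequence of Proposition~\ref{Classification-ADE-surfaces}, which is itself quoted from \cite{LZ1} (Proposition 2.6), and the actual surface-theoretic work lives in that reference. Your reconstruction follows the same standard route one finds there: adjunction to fix $K_S\cdot C$ from $C^2$; Castelnuovo contraction of $C$ when $C^2=-1$; the base-point-free pencil $|C|$ and descent to a relatively minimal model $\mathbb{F}_e$ with $e\le 1$ when $C^2=0$; and the degree-one morphism $\varphi_{|C|}\colon S\to\mathbb{P}^2$ when $C^2=1$. Your observation that ``general position'' is tautological is exactly right, since the paper defines it, immediately before the corollary, as the absence of irreducible rational curves of self-intersection $\le -2$ on the blowup, which is condition (i). One simplification: the discriminant computation in case (i) is unnecessary, since Noether's formula on a rational surface gives $K_{X_n}^2=10-\operatorname{rank}\mathrm{Pic}(X_n)=9-n$ outright; the only real content of (i) is ampleness of $-K_{X_n}$.

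That ampleness is also the one genuine soft spot, which you flag but do not close, and it is slightly subtler than your sketch suggests: condition (i), transported to $X_n$ as you do, controls only \emph{rational} curves, so nefness of $-K_{X_n}$ is not immediate --- an irreducible curve $D$ with $K_{X_n}\cdot D>0$ could a priori have positive genus (note $D^2<0$ is forced, since $-K_{X_n}$ is effective by Riemann--Roch as $K_{X_n}^2=9-n\ge 1$). The gap is fillable in two lines once you have the birational model, which your no-$(-2)$-rational-curve argument does provide: writing $D=dh-\sum_{i=1}^{n}m_il_i$ with $t:=K_{X_n}\cdot D\ge 1$ and $p_a(D)\ge 1$, i.e.\ $\sum m_i=3d+t$ and $\sum m_i^2\le d^2+t$, Cauchy--Schwarz over $n\le 8$ points gives $(3d+t)^2\le 8(d^2+t)$, hence $d^2+6dt+t^2-8t\le 0$, which for $d,t\ge 1$ forces $(t-1)^2\le 1-d^2\le 0$, and the boundary case $d=t=1$, $n=8$ would need $m_i=1/2$, a contradiction. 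So $-K_{X_n}$ is nef, and your concluding step (nef and big with no $(-2)$-curve implies ample, via adjunction, Hodge index and Nakai--Moishezon) is then correct; alternatively one can simply cite \cite{Dem}, where ``points in general position'' is shown equivalent to $X_n$ being del Pezzo for $n\le 8$. With that step made explicit, your proof is complete and consistent with the source \cite{LZ1}.
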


\begin{corollary}\label{weight-lattice}
Let $(S,C)$ be an $ADE$-surface, and $G$ be the corresponding simple Lie group.
The lattice ${\rm Pic}(S)/(\mathbb{Z}C+\mathbb{Z}K_S)$ is the corresponding weight lattice.
Hence its dual $Hom({\rm Pic}(S)/(\mathbb{Z}C+\mathbb{Z}K_S),\mathbb{C}^*)$ is a maximal torus of $G$.
\end{corollary}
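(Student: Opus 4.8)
The plan is to treat $ {\rm Pic}(S) $ as a unimodular lattice and to identify the quotient with the dual of the root lattice. Write $ L={\rm Pic}(S) $; since $ S $ is a smooth rational surface, the intersection form makes $ L $ a unimodular lattice of signature $ (1,r-1) $, where $ r={\rm rank}(L) $. Set $ M=\mathbb{Z}K_S+\mathbb{Z}C $ and let $ Q=M^{\perp}=\langle K_S,C\rangle^{\perp} $, which by Definition~\ref{ADE-surface} and Proposition~\ref{Classification-ADE-surfaces} is the negative definite root lattice of $ G $ of rank $ r-2 $. I would first record that in the simply laced case the weight lattice $ P $ of $ G $ is exactly the dual lattice $ Q^{*}=\mathrm{Hom}(Q,\mathbb{Z}) $: all roots have square $ -2 $ for the intersection form (equivalently $ +2 $ after a sign change), so coroots coincide with roots and $ P=\{\lambda:\langle\lambda,\alpha\rangle\in\mathbb{Z}\text{ for all roots }\alpha\}=Q^{*} $.

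The heart of the argument is the pairing map $ \rho\colon L\to Q^{*} $, $ x\mapsto(q\mapsto x\cdot q) $. Its kernel is $ Q^{\perp} $, which equals the saturation $ \bar M $ of $ M $ (the orthogonal complement of any set is saturated, and $ Q^{\perp}=(M^{\perp})^{\perp}=\bar M $ by non-degeneracy). Because $ Q $ is saturated, $ L/Q $ is free, so from the sequence $ 0\to Q\to L\to L/Q\to 0 $ the dual inclusion gives a surjection $ L^{*}\to Q^{*} $ (the $ \mathrm{Ext}^1 $ term vanishes); transporting along the self-duality $ L\cong L^{*} $ furnished by unimodularity shows that $ \rho $ itself is surjective. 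Hence $ \rho $ induces an isomorphism $ L/\bar M\cong Q^{*}=P $.

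The one genuinely geometric point, and the main obstacle, is to upgrade this to $ L/M\cong P $, that is, to show that $ M=\mathbb{Z}K_S+\mathbb{Z}C $ is already saturated ($ \bar M=M $), equivalently that $ L/M $ is torsion free. I would settle this case by case using the explicit models of Corollary~\ref{surface-description}. For $ G=E_n $, blowing down $ C $ identifies $ L/M $ with $ {\rm Pic}(X_n)/\mathbb{Z}K_{X_n} $, which is torsion free and equals $ P(E_n) $ by the classical computation of Manin and Demazure. For $ G=D_n $ (resp. $ A_n $) one writes $ L=\mathbb{Z}f\oplus\mathbb{Z}s\oplus\bigoplus_i\mathbb{Z}l_i $ (resp. $ L=\mathbb{Z}h\oplus\bigoplus_i\mathbb{Z}l_i $) and notes that $ M $ is generated by $ C $ together with a primitive combination such as $ K_S+2C=-2s+\sum_i l_i $ (resp. $ K_S+3C=\sum_i l_i $); a direct check in these coordinates then shows that any class $ x $ with $ kx\in M $ for some $ k\geq1 $ already lies in $ M $. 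Combined with the previous paragraph this yields $ {\rm Pic}(S)/(\mathbb{Z}C+\mathbb{Z}K_S)\cong P $.

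Finally, for the assertion about the torus I would invoke that $ G $ is taken in its simply connected form, so its maximal torus $ T $ has character lattice $ X^{*}(T)=P $. Therefore $ \mathrm{Hom}({\rm Pic}(S)/(\mathbb{Z}C+\mathbb{Z}K_S),\mathbb{C}^{*})\cong\mathrm{Hom}(P,\mathbb{C}^{*})=T $, a maximal torus of $ G $, as claimed. I expect all steps except the saturation of $ M $ to be purely formal lattice theory; that step is where the classification of $ ADE $-surfaces genuinely enters, and it is the part I would write out most carefully.
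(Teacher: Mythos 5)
Your proposal is correct and takes essentially the same route as the paper: the paper's proof simply asserts that the intersection pairing $\langle C,K_S\rangle^{\perp}\times {\rm Pic}(S)\rightarrow\mathbb{Z}$ descends to a \emph{perfect} pairing $\langle C,K_S\rangle^{\perp}\times {\rm Pic}(S)/(\mathbb{Z}C+\mathbb{Z}K_S)\rightarrow\mathbb{Z}$, identifying the quotient with the dual of the simply laced root lattice, i.e.\ the weight lattice, and then uses simple connectedness of $G$ for the torus statement. Your additional steps --- surjectivity of $\rho\colon L\to Q^{*}$ via unimodularity and saturation of $Q$, and the case-by-case verification that $\mathbb{Z}C+\mathbb{Z}K_S$ is saturated (equivalently that the quotient is torsion free) --- are exactly the content compressed into the paper's one-line claim of perfection, so they fill in details the paper omits rather than constituting a different argument.
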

\begin{proof}
The intersection pairing $$\langle C,K_S\rangle^{\perp}\times {\rm Pic}(S)\rightarrow \mathbb{Z}$$
induces a perfect non-degenerate pairing
$$\langle C,K_S \rangle^{\perp}\times {\rm Pic}(S)/(\mathbb{Z}C+\mathbb{Z}K_S)\rightarrow \mathbb{Z}.$$
Since $\langle C,K_S\rangle^{\perp}$ is the (simply laced) root lattice of $G$, ${\rm Pic}(S)/(\mathbb{Z}C+\mathbb{Z}K_S)$
is the weight lattice of $G$. And the last statement follows since $G$ is simply connected.
\end{proof}

For convenience, we draw the Dynkin diagrams of the root lattices $\langle K_{S},C\rangle^{\perp}$ for the given $ADE$-surfaces $(S,C)$ as Figures 1-3.


\begin{equation*}  \label{figure1}
\underset{\text{Figure 1. The root system }E_n: \alpha_1=-h+l_1+l_2+l_3, \alpha_i=l_i-l_{i-1}, 2\leq i\leq n}{\setlength{%
\unitlength}{1.2cm}\begin{picture}(6, 3) \put(0, 1){\circle*{.2}} \put(1.2,
1){\circle*{.2}} \put(2.4, 1){\circle*{.2}} \put(3.6, 1){\circle*{.2}}
\put(4.8, 1){\circle*{.2}} \put(6,
1){\circle*{.2}}\put(3.6,2.2){\circle*{.2}} \put(1.5, 1){\circle*{.1}}
\put(1.8, 1){\circle*{.1}} \put(2.1, 1){\circle*{.1}} \put(0, 1){\line(1,
0){1.2}} \put(2.4, 1){\line(1, 0){1.2}} \put(3.6, 1){\line(1, 0){1.2}}
\put(4.8,1){\line(1,0){1.2}} \put(3.6, 1){\line(0, 1){1.2}} \put(-0.5,
.5){$\alpha_L=\alpha_n$} \put(1.1, .5){$\alpha_{n-1}$} \put(2.3,
.5){$\alpha_5$} \put(3.5, .5){$\alpha_4$} \put(4.7, .5){$\alpha_3$}\put(5.8,
.5){$\alpha_R=\alpha_2$} \put(3.8, 2.1){$\alpha_T=\alpha_1$} \end{picture}\
\ }
\end{equation*}

\begin{equation*}  \label{figure2}
\underset{\text{Figure 2. The root system } D_{n}: \alpha_1=-f+l_1, \alpha_i=l_i-l_{i-1}, 2\leq i\leq n}{\setlength{%
\unitlength}{1.2cm}\begin{picture}(5, 3) \put(0, 1){\circle*{.2}} \put(1.2,
1){\circle*{.2}}\put(2.4,1){\circle*{.2}}\put(3.6,1){\circle*{.2}}%
\put(4.8,1){\circle*{.2}} \put(3.6,2.2){\circle*{.2}} \put(1.5,
1){\circle*{.1}} \put(1.8, 1){\circle*{.1}} \put(2.1, 1){\circle*{.1}}
\put(0, 1){\line(1, 0){1.2}} \put(2.4, 1){\line(1, 0){1.2}} \put(3.6,
1){\line(1, 0){1.2}} \put(3.6, 1){\line(0, 1){1.2}} \put(-0.5,
.5){$\alpha_L=\alpha_n$} \put(1, .5){$\alpha_{n-1}$} \put(2.3,
.5){$\alpha_4$}\put(3.5, .5){$\alpha_3$} \put(4.5,
.5){$\alpha_R=\alpha_2$}\put(3.8, 2.1){$\alpha_T=\alpha_1$} \end{picture}\ }
\end{equation*}

\begin{equation*}  \label{figure3}
\underset{\text{Figure 3. The root system } A_{n}: \alpha_i=l_{i+1}-l_{i}, 1\leq i\leq n}{\setlength{%
\unitlength}{1.2cm}\begin{picture}(5, 2) \put(0, 1){\circle*{.2}} \put(1.2,
1){\circle*{.2}} \put(2.4, 1){\circle*{.2}} \put(3.6, 1){\circle*{.2}}
\put(4.8, 1){\circle*{.2}} \put(0,1){\line(1, 0){1.2}} \put(1.5,
1){\circle*{.1}} \put(1.8, 1){\circle*{.1}}\put(2.1, 1){\circle*{.1}}
\put(2.4, 1){\line(1, 0){1.2}} \put(3.6, 1){\line(1, 0){1.2}} \put(-0.5,
.5){$\alpha_L=\alpha_n$} \put(1, .5){$\alpha_{n-1}$} \put(2.3,
.5){$\alpha_3$} \put(3.5, .5){$\alpha_2$} \put(4.6, .5){$\alpha_R=\alpha_1$}
\end{picture}\ \ }
\end{equation*}


In these Dynkin diagrams, we specify three special nodes: the top node $\alpha_T$, the right-end node $\alpha_R$ and the left-end node $\alpha_L$, if any.
These special nodes determine three fundamental representation bundles.

\begin{definition}\label{def-line-ruling-degree}
Let $(S,C)$ be an $ADE$-surface. \par
\begin{itemize}
\item[(1)] A class $l\in {\rm Pic}(S)$ is called a line if $l^2=lK_S=-1$ and $lC=0$.\par
\item[(2)] A class $r\in {\rm Pic}(S)$ is called a ruling if $r^2=0, rK_S=-2$ and $rC=0$.\par
\item[(3)] A section $s_D\in H^0(S,\calO_S(D))$ is called of degree $d$, if $D(-K_S)=d$.
\end{itemize}
We denote the root system of the root lattice in Proposition~\ref{Classification-ADE-surfaces} (respectively, the set of lines, the set of rulings) by $R(S,C)$
(respectively, $I(S,C), J(S,C)$).
\end{definition}

Note that there is a $\mathbb{Z}$-basis for ${\rm Pic}(S)$, such that all these sets and the curve $C$ can be written down concretely (see \cite{LZ1} for details).
The adjoint principal $G$-bundle (where $G$ is of rank $n$) is
\[
\mathcal{G}:=\calO_S^{\bigoplus n}\bigoplus\bigoplus_{\alpha\in R(S,C)%
}\calO_S(\alpha).
\]

The fundamental representation bundles determined by $\alpha_L$, denoted by $\mathcal{L}_G$,
are the following (see \cite{LZ1} for details):

For $G=E_n$ with $3\leq n\leq7$,
\[
\mathcal{L}_{E_n}:=\bigoplus_{l\in I(S,C)}\calO_S(l);
\]
and for $G=E_8$,
\[
\mathcal{L}_{E_8}:=\calO_S(-K_S)^{\bigoplus8}\bigoplus\bigoplus_{l\in I(S,C)%
}\calO_S(l)\cong\mathcal{E}_{8}\otimes\calO_S(-K_S).
\]

For $G=D_n$ and $A_n$,
\[
\mathcal{L}_{G}:=\bigoplus_{l\in I(S,C)}\calO_S(l).
\]

For $G=E_n, 3\leq n\leq 7$, the rulings in corresponding surfaces are used to construct the fundamental representation bundles
$\mathcal{R}_{E_n}$ determined by $\alpha_R$ (see \cite{LZ1} for details):

For $G=E_n$ with $3\leq n\leq6$,
\[
\mathcal{R}_{E_n}:=\bigoplus_{D\in J(S,C)}\calO_S(D).
\]

For $G=E_7$,
\[
\mathcal{R}_{E_7}:=\calO_S(-K_S)^{\bigoplus7}\bigoplus\bigoplus_{D\in J(S,C)%
}\calO_S(D)\cong\mathcal{E}_{7}\otimes\calO_S(-K_S).
\]

We summarize some facts from \cite{LZ1} about these representation bundles in the following lemma.
\begin{lemma}\label{weight-set}
For any irreducible representation $V_{\lambda}$ of $G$ with the highest weight $\lambda$, denote by $\Pi(\lambda)$ or $\Pi(V_{\lambda})$ the set of all weights of $V_{\lambda}$.
\begin{itemize}
\item[(i)] For $G=A_{n-1}, D_n$, or $E_n$, the exceptional class $l_n$ represents the highest weight associated with $\alpha_L$.
Therefore $\Pi(l_n)=I(S,C)$ for $G\neq E_8$; and $\Pi(l_8)=I(S,C)\cup\{-K_S\}$ for $G=E_8$.
\item[(ii)] For $G=E_n$, the class $h-l_1$ represents the highest weight associated with $\alpha_R$.
Therefore $\Pi(h-l_1)=J(S,C)$ for $3\leq n\leq 6$; $\Pi(h-l_1)=J(S,C)\cup\{-K_S\}$ for $n=7$;
and $J(S,C)\subsetneq \Pi(h-l_1)$ for $n=8$.
\end{itemize}
\end{lemma}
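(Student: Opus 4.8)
The plan is to carry out everything inside the weight lattice $\mathrm{Pic}(S)/(\mathbb{Z}C+\mathbb{Z}K_S)$ furnished by Corollary~\ref{weight-lattice}, where the perfect pairing against the root lattice $\langle K_S,C\rangle^{\perp}$ lets me read off weights directly from intersection numbers. One convention must be recorded first: on $\langle K_S,C\rangle^{\perp}$ the roots satisfy $\alpha^2=-2$, so the intersection form is the negative of the standard simply laced root pairing, and consequently the fundamental weight dual to a simple root $\alpha_j$ is characterized by meeting the simple roots $\alpha_i$ in the intersection numbers $-\delta_{ij}$. With this in hand, the first step is to identify the two distinguished classes. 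Reading the simple roots off Figures~1--3 and using $h^2=1$, $l_i\cdot l_j=-\delta_{ij}$, $h\cdot l_i=0$, I would compute $l_n^2=l_n\cdot K_S=-1$, $l_n\cdot C=0$ and $l_n\cdot\alpha_i=-\delta_{in}$, so that $l_n$ is a line whose image is the fundamental weight $\omega_L$ attached to $\alpha_L=\alpha_n$; the analogous computation gives $(h-l_1)^2=0$, $(h-l_1)\cdot K_S=-2$, $(h-l_1)\cdot C=0$ and $(h-l_1)\cdot\alpha_i=-\delta_{i2}$, so $h-l_1$ is a ruling mapping to the fundamental weight $\omega_R$ attached to $\alpha_R=\alpha_2$. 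In particular both classes are highest weights of the corresponding fundamental representations.

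Next I would bring in the Weyl group $W$ of $G$, acting on $\mathrm{Pic}(S)$ by the reflections $s_\alpha$, $\alpha\in R(S,C)$. Since every root is orthogonal to both $K_S$ and $C$, this action fixes $K_S$ and $C$ and therefore descends to the weight lattice. Two facts are then needed: that $W$ permutes $I(S,C)$ transitively and $J(S,C)$ transitively, so that $I(S,C)=W\cdot l_n$ and $J(S,C)=W\cdot(h-l_1)$; and that the projection $\mathrm{Pic}(S)\to\mathrm{Pic}(S)/(\mathbb{Z}C+\mathbb{Z}K_S)$ is injective on lines and on rulings. The injectivity is a short lattice argument: if two lines differ by $aC+bK_S$, pairing with $C$ and with $K_S$ and using $l\cdot C=0$, $l\cdot K_S=-1$ yields a homogeneous system whose determinant is the Gram determinant $C^2K_S^2-(C\cdot K_S)^2$; this is nonzero because the plane $\mathbb{Z}C+\mathbb{Z}K_S$ is the nondegenerate orthogonal complement of the full-rank lattice $\langle K_S,C\rangle^{\perp}$, forcing $a=b=0$, and the same works for rulings. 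Thus $I(S,C)$ and $J(S,C)$ inject into the weight lattice as single $W$-orbits containing $\omega_L$ and $\omega_R$ respectively.

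It remains to match these orbits against the full weight sets, and here the representations split into cases. When the fundamental representation is minuscule --- namely $\alpha_L$ for $G=A_{n-1},D_n,E_6,E_7$ and $\alpha_R$ for $G=E_n$ with $3\leq n\leq 6$ --- every weight has multiplicity one and is $W$-conjugate to the highest weight, so $\Pi(\omega_L)=W\cdot\omega_L$ is exactly the image of $I(S,C)$ and $\Pi(\omega_R)=W\cdot\omega_R$ exactly that of $J(S,C)$, consistent with the classical enumerations such as the $27$ and $56$ lines in the $E_6$ and $E_7$ cases. When the representation is the adjoint one --- $\alpha_L$ for $E_8$ and $\alpha_R$ for $E_7$ --- its weights are the roots together with the zero weight, of multiplicity equal to the rank. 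The nonzero weights are the images of the $240$ lines (respectively the $126$ rulings), while the zero weight is the image of $-K_S$, since $K_S\equiv 0$ in the quotient; this produces the extra $\{-K_S\}$ and matches the summands $\calO_S(-K_S)^{\oplus 8}$ and $\calO_S(-K_S)^{\oplus 7}$ in $\mathcal{L}_{E_8}$ and $\mathcal{R}_{E_7}$. Finally, for $\alpha_R$ in $E_8$ the representation is strictly larger than the orbit of its highest weight, so the rulings realize only a proper subset of the weights and $J(S,C)\subsetneq\Pi(h-l_1)$.

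The main obstacle I anticipate is precisely this passage from orbits to full weight sets in the non-minuscule cases. For the minuscule representations the equality $\Pi=W\cdot(\text{highest weight})$ is automatic, but for $E_8$ at $\alpha_L$ and $E_7$ at $\alpha_R$ one must invoke the exact weight structure of the adjoint representation and verify that the \emph{only} weight outside the highest-weight orbit is the zero weight, realized precisely by $-K_S$ with the correct multiplicity. This is where the explicit enumerations of lines and rulings from \cite{LZ1}, together with the classical description of the adjoint weight set, are indispensable; the same enumerations are what pin down $J(S,C)$ as a proper subset in the $E_8$ ruling case.
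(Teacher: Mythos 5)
Your proof is correct and follows essentially the same route as the paper's: identify $l_n$ and $h-l_1$ as the fundamental weights attached to $\alpha_L$ and $\alpha_R$ by pairing with the simple roots of Figures 1--3 (with the sign convention $\mu(\alpha)=-\mu\cdot\alpha$ of \cite{LZ1}), then combine Weyl-group transitivity on lines and rulings from \cite{LZ1} with the minuscule structure for $G\neq E_8$ (resp.\ $n\leq 6$) and the adjoint/quasi-minuscule structure for $E_8$ at $\alpha_L$ and $E_7$ at $\alpha_R$, where $-K_S$ accounts for the zero weight (the paper exhibits this as $-K_S=l_8-(-3h+l_1+\cdots+l_7+2l_8)$, you as $K_S\equiv 0$ in the quotient lattice --- equivalent). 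Your extras --- the Gram-determinant check that lines and rulings inject into $\mathrm{Pic}(S)/(\mathbb{Z}C+\mathbb{Z}K_S)$, and the counting needed for strictness in the $E_8$/$\alpha_R$ case --- are verifications the paper leaves implicit (its part (ii) is just ``the proof is similar''), not a different method.
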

\begin{proof}
(i) According to Figure 1, 2 and 3, by the definition of the pairing between weights and roots in Page 759 of \cite{LZ1}, we see that $l_n(\alpha_L)=-l_n\cdot \alpha_L=1$,
while $l_n(\alpha_i)=-l_n\cdot \alpha_i=0$, if $\alpha_i\neq\alpha_L$. Thus $l_n$ represents the highest weight associated with $\alpha_L$.

For $G\neq E_8$, $l_n$ is minuscule (that is, $W(G)$ acts on $\Pi(l_n)$ transitively), and by \cite{LZ1}, $W(G)$ acts on $I(S,C)$ transitively.
Therefore $\Pi(l_n)=I(S,C)$.

For $G=E_8$, $-K_S\in \Pi(l_8)$ because $-K_S=l_8-(-3h+l_1+\cdots+l_7+2l_8)$ and $-3h+l_1+\cdots+l_7+2l_8$ is a positive root of $E_8$.
In fact, $-K_S$ is the zero weight in $\Pi(l_8)$ (that is, $W(E_8)$ acts on $-K_S$ trivially).
Now $l_n$ is quasi-minuscule (that is, $W(G)$ acts on non-zero weights of $\Pi(l_n)$ transitively), and by \cite{Ma} or \cite{LZ1}, $W(G)$ acts on $I(S,C)$ transitively.
Therefore $\Pi(l_8)=I(S,C)\cup\{-K_S\}$.

(ii) The proof is similar.
\end{proof}


\subsection{Quadratic forms over associated bundles}\label{quadratic-form}

Let $V_{\lambda}$ be a fundamental representation of a semisimple Lie group
$G$ with the fundamental weight $\lambda$. Let ${Sym}^2V_{\lambda}$ be the second symmetric product of $V$.
 Since $2\lambda$ is the highest weight in the weight set of ${Sym}^2V_{\lambda}$, $V_{2\lambda}$  is a summand of the representation ${Sym}^2V_{\lambda}$,
 where $V_{2\lambda}$ is the fundamental representation
associated with the highest weight $2\lambda$.
Therefore there is another representation $W$ such that  ${Sym}^2V_{\lambda}=W \bigoplus V_{2\lambda}$.

With the help of the program LiE (\cite{LiE}), we list the decomposition of ${Sym}^2V_{\lambda}$ for simply laced Lie group $G$ with $\lambda$ the
fundamental weight associated with $\alpha_L=\alpha_n$ (see Figure 1, 2 and 3).

In the $G=E_n$ case, for $4\leq n \leq 6$,
$W$ is a non-trivial irreducible $G$-module of the
least dimension, which is a minuscule representation of $G$. If $r = 7$, then $W$ is the adjoint
representation, which is quasi-minuscule (that is, all the non-zero weights have multiplicity $1$
and form one orbit of the Weyl group $W(E_7)$ of $E_7$). If $r=8$, then $W=W_1\bigoplus\bbC$, where $W_1$ is
the irreducible representation associated with the node $\alpha_R$ (of dimension $3875$), and $\bbC$ is
the trivial representation.

In the $G=D_n$ case, $W=\bbC$ is the trivial representation.

In the $G=A_n$ case, $W=\{0\}$, that is, ${Sym}^2V_{\lambda}=V_{2\lambda}$.

   Let $P$ be the maximal parabolic subgroup of $G$ corresponding to the fundamental representation $V_{\lambda}$.
Then we have a homogeneous variety $G/P$. It is well-known that
$G/P\hookrightarrow \bbP(V_{\lambda})$ is a subvariety defined by quadratic relations (\cite{Li}).
A way to write explicitly the quadratic relations is the following. Let $C(G/P)$ be the affine cone over $G/P$.
Let $pr$ be the natural projection ${Sym}^2V_{\lambda} \rightarrow W$,
and $Ver : V_{\lambda} \rightarrow {Sym}^2V_{\lambda}$ be the Veronese map $x \mapsto x^2$,
then it is well known that $C(G/P)$  is the fibre $(pr\circ Ver)^{-1}(0)$
(as a scheme, see \cite{BP} Proposition 4.2 and references therein).
Thus the homogeneous variety $G/P$ is defined by the quadratic form:
 $$Q: V_{\lambda}\rightarrow {Sym}^2V_{\lambda}\rightarrow W.$$

 In fact, we can show that the quadratic form could be globally defined over fundamental representation bundles:
 $$\mathcal{Q}:\calL_G\rightarrow {Sym}^2\calL_G\rightarrow \mathcal{W},$$
 such that $G/P$ is fiberwise defined by $\mathcal{Q}$.

    Let $\calL_G$ be the fundamental representation bundle defined as in the end of Section~\ref{subsection-surface} by lines on an $ADE$-surface $S$.
    That is, $\calL_G$ corresponds to the left-end node $\alpha_L$ (or equivalently,
    associated with the fundamental weight $l_n$ corresponding to $\alpha_L$ for $G=A_{n-1}, D_n$ or $E_n$, by Lemma~\ref{weight-set}).
    For a quadratic form over a vector bundle $\calL_G$, we denote $\mathcal{Q}^{-1}(0)$ the subscheme of
 $\bbP(\calL_G)$ defined by $x\in \bbP(\calL_G)$, such that $\mathcal{Q}(x)=0$.

   By Lemma~\ref{weight-set},
   $$\calL_G=\bigoplus\limits_{\mu\in\Pi(l_n)\subseteq Pic(S)}\calO_S(\mu)^{\bigoplus k_{\mu}},$$
   where the multiplicity $k_{\mu}=8$ if $\mu=-K_S$ and $G=E_8$;
   otherwise, $k_{\mu}=1$.
      Let $\Pi({Sym}^2 \calL_G)$ be the set of weights of ${Sym}^2\calL_G$ which is saturated (see Section 13.4 of \cite{Hum}).
   Then $$\Pi({Sym}^2 \calL_G)=\{\lambda_1+\lambda_2|\lambda_1,\lambda_2\in \Pi(l_n)\}\subseteq Pic(S),$$
   and
   $${Sym}^2\calL_G=\bigoplus\limits_{\mu\in\Pi({Sym}^2 \calL_G)\subseteq Pic(S)}\calO_S(\mu)^{\bigoplus m_{\mu}},$$
   where $m_\mu$ is the multiplicity uniquely determined by $\mu$ and ${Sym}^2 \calL_G$. Since $2l_n$ occurs with multiplicity one, by the saturatedness,
   $\Pi(2l_n)\subseteq \Pi({Sym}^2 \calL_G)$. Therefore ${Sym}^2 \calL_G $ contains a summand $\mathcal{V}_{2l_n}$ which is an irreducible representation bundle associated with the highest weight $2l_n$.
   We write $\mathcal{V}_{2l_n}$ as
   $$\mathcal{V}_{2l_n}=\bigoplus\limits_{\mu\in\Pi({Sym}^2 \calL_G)\subseteq Pic(S)}\calO_S(\mu)^{\bigoplus n_{\mu}},$$
   where $n_{\mu}=0$ if $\mu\notin \Pi(2l_n)$ and $1\leq n_{\mu}\leq m_{\mu}$ if $\mu\in \Pi(2l_n)$.

   The other summand $\calW$ of ${Sym}^2 \calL_G$ is automatically a representation bundle:
   $$\mathcal{W}=\bigoplus\limits_{\mu\in\Pi({Sym}^2 \calL_G)\subseteq Pic(S)}\calO_S(\mu)^{\bigoplus (m_{\mu}-n_{\mu})}.$$

   We are mainly interested in the representation bundle $\mathcal{W}$, which we discuss case by case according to $G=E_n$, $D_n$ or $A_{n-1}$.

      (i) For $G=E_n$, $h-l_1\in \Pi({Sym}^2 \calL_G)$. By \cite{LiE}, $\mathcal{W}$ contains a weight space with the weight $h-l_1$.
   Thus the set $J(S,C)$ of rullings on $S$ are contained in the set $\Pi(\mathcal{W})$ of weights of $\mathcal{W}$.
   Therefore, as a vector bundle, $\mathcal{W}$ contains $\bigoplus\limits_{\mu\in J(S,C)}\calO_S(\mu)$ as summands.
   By counting the rank of $\mathcal{W}$ (\cite{LiE}) and the number of the elements of $J(S,C)$, we find that for $4\leq n\leq 7$,
    $$\mathcal{W}=\bigoplus\limits_{\mu\in J(S,C)}\calO_S(\mu)=\calR_{E_n}$$
    is the irreducible representation bundle associated with $\alpha_R$ (Lemma~\ref{weight-set}).

        Similarly, for $G=E_8$, by \cite{LiE}, $\mathcal{W}$ is a direct sum of $\calR_{E_8}$ and a line bundle which is a trivial representation.
    Note that among the weights of ${Sym}^2\calL_G$, only $-2K_S$ appears as a zero weight (Lemma~\ref{weight-set}).
    Thus the line bundle considered here is nothing but $\calO_S(-2K_S)$.
    Therefore  $$\mathcal{W}=\calR_{E_n}\bigoplus \calO_S(-2K_S).$$

      (ii) For $G=D_n$, $f\in \Pi({Sym}^2 \calL_G)$. By \cite{LiE}, $\mathcal{W}$ is a line bundle which is a trivial representation bundle.
      Note that the only zero weight of ${Sym}^2 \calL_G$ is $f$. Therefore $\mathcal{W}\cong\calO_S(f)$.\par

      (iii) For $G=A_{n-1}$, by a dimension counting, ${Sym}^2 \calL_G\cong \calV_{2l_n}$. Therefore $\calW=0$.\par

    Thus we achieved the first statement of the following theorem.
 \begin{theorem}\label{mainThm1} The notations are as above.

  (1) We have a decomposition of representation bundles:
 $${Sym}^2\calL_G=\mathcal{W}\bigoplus\mathcal{V}_{2l_n}.$$
 Here $\mathcal{W}=\calR_{E_n}$ for $G=E_n$ with $4\leq n\leq 7$;
 $\mathcal{W}=\calR_{E_8}\bigoplus \calO_S(-2K_S)$ for $G=E_8$;
 $\mathcal{W}=\calO_S(f)$ for $G=D_n$; and $\mathcal{W}=0$ for $G=A_{n-1}$.

 (2) The projection to the first summand defines a quadratic form on $\calL_G$
 $$\mathcal{Q}:\calL_G\rightarrow {Sym}^2\calL_G\rightarrow \mathcal{W},$$ such that the homogeneous variety $G/P$ is the fiber of the subscheme
 (considered as a scheme defined over $S$) $\mathbb{P}(\mathcal{Q}^{-1}(0))\subseteq \mathbb{P}(\calL_G)$.

 (3) By taking global sections, for $G\neq E_8$, we realize $G/P$ as a subvariety of $\bbP(H^0(S,\calL_G))$,
 cut out by quadratic equations. For $G=E_8$, we replace $H^0(S,\calL_G)$ by a subspace $V$ of dimension $248$,
 where $V=\mathbb{C}\langle s_K\rangle^{\bigoplus 8}\bigoplus\bigoplus\limits_{\mu\in I(S,G)} H^0(S,\calO_S(\mu))$
 with $s_K$ a fixed non-zero global section of $\calO_S(-K_S)$.
 \end{theorem}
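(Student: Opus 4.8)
Since the case-by-case analysis preceding the statement already yields part (1) — the representation-theoretic splitting $\mathrm{Sym}^2 V_\lambda = W\oplus V_{2\lambda}$ globalizes to $\mathrm{Sym}^2\calL_G=\calW\oplus\calV_{2l_n}$ once the summands are recovered from their $\mathrm{Pic}(S)$-graded weight data, with $\calW$ pinned down in each type by comparing ranks against $|J(S,C)|$ — my plan concentrates on parts (2) and (3). The quadratic form itself is already written down as $\calQ = pr\circ Ver$, where $Ver:\calL_G\to\mathrm{Sym}^2\calL_G$ is the fibrewise Veronese map $x\mapsto x^2$ (a morphism of $S$-schemes, homogeneous of degree $2$) and $pr:\mathrm{Sym}^2\calL_G\to\calW$ is the projection from (1); so the real content of (2) is the fibrewise identification and the fibre-bundle conclusion.

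For (2) I would argue fibrewise. Because the decomposition in (1) is a decomposition of \emph{representation} bundles, $pr$ is fibrewise $G$-equivariant; $Ver$ is natural and hence $G$-equivariant; so $\calQ$ is a fibrewise $G$-equivariant quadratic form and $\bbP(\calQ^{-1}(0))$ is a well-defined closed subscheme of $\bbP(\calL_G)$. Over a point $s\in S$ the fibre $(\calL_G)_s$ is the $G$-module $V_\lambda$ — Lemma~\ref{weight-set} matches its weights with $\Pi(l_n)$ — and under this identification $\calQ_s$ is precisely the representation-theoretic form $Q:V_\lambda\to\mathrm{Sym}^2 V_\lambda\to W$. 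By the cited theorem of Kostant (\cite{Li}, and \cite{BP} Proposition~4.2) one has $C(G/P)=(pr\circ Ver)^{-1}(0)$ \emph{as schemes}, so $\bbP(\calQ_s^{-1}(0))=G/P\subset\bbP(V_\lambda)$, and therefore $\bbP(\calQ^{-1}(0))\to S$ is a fibre bundle with fibre $G/P$.

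For (3) I would take global sections. When $G\neq E_8$ the weight $l_n$ is minuscule, so $\Pi(l_n)=I(S,C)$ with all multiplicities one, and each line $l$ is represented by an irreducible $(-1)$-curve; Riemann--Roch together with condition (i) of Definition~\ref{ADE-surface} (the absence of $(-2)$-curves, which forces $h^1=h^2=0$) gives $h^0(S,\calO_S(l))=1$. Summing over $I(S,C)$ yields $\dim H^0(S,\calL_G)=|I(S,C)|=\dim V_\lambda$, and the weight decomposition identifies $H^0(S,\calL_G)$ with $V_\lambda$ as a $G$-module. Under $\bbP(H^0(S,\calL_G))\cong\bbP(V_\lambda)$ the fibrewise $G/P$ of part (2) becomes the standard $G/P\subset\bbP(V_\lambda)$, cut out by the quadrics $Q$. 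For $G=E_8$ the zero weight $-K_S$ occurs with multiplicity $8$, so $\calL_{E_8}$ carries the summand $\calO_S(-K_S)^{\oplus 8}$; to land exactly in the $248$-dimensional adjoint module $V_{l_8}$ I fix a nonzero $s_K\in H^0(S,\calO_S(-K_S))$ and replace $H^0(S,\calL_{E_8})$ by $V=\bbC\langle s_K\rangle^{\oplus 8}\oplus\bigoplus_{l\in I(S,C)}H^0(S,\calO_S(l))$, after which the same argument embeds $G/P$ into $\bbP(V)$.

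The main obstacle is the fibrewise step in (2): one must be certain that the $\mathrm{Pic}(S)$-graded splitting of $\mathrm{Sym}^2\calL_G$ furnished by (1) coincides, over each fibre, with the representation-theoretic splitting $\mathrm{Sym}^2 V_\lambda=W\oplus V_{2\lambda}$, so that $\calQ$ restricts to the honest $Q$ rather than to some other projection compatible only with the torus grading. This is exactly where it is essential that (1) is a decomposition of $G$-representation bundles and not merely of $T_G$-graded bundles: the full $G$-equivariance, inherited from the associated principal-bundle structure of \cite{LZ1}, is what determines $\calQ$ on each isotypic piece and lets Kostant's result apply. A secondary point to watch is that the cited statement must be invoked in its scheme-theoretic form, so that $\bbP(\calQ^{-1}(0))$ is reduced and genuinely equal to $G/P$; and in the $E_8$ case one must check that the restriction to the subspace $V$ is compatible with the quadratic equations defining $G/P$ in $\bbP(V_{l_8})$.
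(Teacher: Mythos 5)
Your proposal is correct and takes essentially the same route as the paper's own proof: part (1) from the preceding case-by-case weight analysis, part (2) by identifying $\calQ$ fibrewise with the representation-theoretic form $Q$ and invoking \cite{Li} (and \cite{BP}, Proposition 4.2) for the scheme-theoretic identification of $C(G/P)$ with $(pr\circ Ver)^{-1}(0)$, and part (3) via the one-dimensionality of $H^0(S,\calO_S(\mu))$ for $\mu\in I(S,G)$ and the fixed section $s_K$ in the $E_8$ case. The only differences are expository: you justify $h^0=1$ by Riemann--Roch where the paper simply cites \cite{LZ1}, and you make explicit the $G$-equivariance underlying the fibrewise identification, which the paper leaves implicit.
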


\begin{proof}

It remains to verify (2) and (3), which are essentially consequences of (1).

(2). Note that fiberwise, the map
$$\mathcal{Q}: \calL_G\rightarrow {Sym}^2\calL_G=\mathcal{W}\bigoplus\mathcal{V}_{2l_n}\rightarrow \calW$$
is exactly the map (Lemma~\ref{weight-set})
$$Q: V_{l_n}\rightarrow {Sym}^2 V_{l_n}\cong W\bigoplus V_{2l_n}\rightarrow W,$$
where $V_{l_n}, W$ and $V_{2l_n}$ are as in the beginning of Section~\ref{quadratic-form}.

By \cite{Li}, $Q^{-1}(0)$ is the cone over $G/P$ in $V_{l_n}$, that is $\mathbb{P}(Q^{-1}(0))=G/P\subseteq \mathbb{P}(V_{l_n})$. \par

(3). First by \cite{LZ1}, every element $\mu\in I(S,G)$ is represented by a unique irreducible curve in an $ADE$-surface $S$ and hence $\dim H^0(S,\calO_S(\mu))=1$.
For $G\neq E_8$, recall that $\calL_G=\bigoplus\limits_{\mu\in I(S,G)}\calO_S(\mu)$.
Therefore we can choose a unique global section for each summand of $\calL_G$ up to a constant.

By \cite{Li}, $C(G/P)\subseteq V_{l_n}$ is defined by finitely many quadratic polynomials. Let $f(x_{\mu}|_{\mu\in I(S,G)})$'s be such polynomials.
Let $s_{\mu}$ be the global section of $\calO_S(\mu), \mu\in I(S,G)$. Then $H^0(S,\calL_G)=\{\sum\limits_{\mu\in I(S,G)}x_{\mu}s_{\mu}|x_{\mu}\in\mathbb{C}\}$,
and the same polynomials $f(x_{\mu}|_{\mu\in I(S,G)})$'s define $G/P$.

   For $G=E_8$, since $H^0(S,\calO_S(-K_S))$ is of dimension two, we should fix any one non-zero global section $s_K$ of  $\calO_S(-K_S)$.
   Similarly by \cite{Li}, $C(G/P)\subseteq V_{l_8}$ is defined by finitely many quadratic polynomials.
   Let $f(x_{\mu}|_{\mu\in \Pi(l_8)})$'s be such polynomials.
   Thus, we take a subspace of $H^0(S,\calL_G)$ of dimension 248 as follows: $V=\mathbb{C}\langle s_K\rangle^{\bigoplus 8}\bigoplus\bigoplus\limits_{\mu\in I(S,G)} H^0(S,\calO_S(\mu))$.
   As a vector space $V=\{x_1s_{K,1}+\cdots+x_8s_{K,8}+\sum x_{\mu}s_{\mu}| x_i,x_{\mu}\in\mathbb{C}\}$ where $s_{K,i}=s_K$ is the basis of the $i$-th $\mathbb{C}\langle s_K\rangle$,
   and the same polynomials $f$'s define $G/P$.

\end{proof}


\begin{remark}
\rm{
 The bundle $\mathcal{W}$ appearing in Theorem~\ref{mainThm1} can be called the {\it representation bundle determined by rulings},
since in the $G=D_n$ and $E_n$ cases, it is constructed by using the rulings.

}
\end{remark}

\section{Cox rings of $ADE$-surfaces and flag varieties}

\subsection{Cox rings of $ADE$-surfaces}

The notion of Cox rings is defined by Cox (\cite{Cox}) for toric varieties and
he shows that for a toric variety, its Cox ring is precisely its total
coordinate ring. Hu and Keel (\cite{HK}) give a general definition of Cox
rings for $\mathbb{Q}$-factorial projective varieties $X$ with ${\rm Pic}(X)_{\mathbb{Q}}\cong N^1(X)$,
and show that it is related to GIT and Mori Dream Spaces.
Batyrev and Popov (\cite{BP}), followed by Derenthal and so on (\cite{Der}), make a
deep study on the Cox rings of del Pezzo surfaces. From their studies, for the del Pezzo surface $X_{n}$
($n\leq7$), its Cox ring is closely linked to the fundamental
representation $V$ with the highest weight $\alpha_{L}$ in the Dynkin diagram of
$E_{n}$. More precisely, the projective variety defined by this ring is
embedded into the flag variety $G/P$, where $G$ is the complex Lie group
$E_{n}$, and $P$ is the maximal parabolic subgroup determined by the node
$\alpha_{L}$. Both  $G/P$ and this variety  are subvarieties of $\mathbb{P}(V)$ defined by quadrics.

Motivated from these, we can define (generalized) Cox rings for $G$-surfaces as follows.

\begin{definition} \label{Cox-def}
Let $(S,C)$ be a $G$-surface with $G=A_n, D_n$ or $E_n$, and a $\mathbb{Z}$-basis of ${\rm Pic}(S)$ be chosen as in Section~\ref{subsection-surface}.
Then we define the Cox ring of $(S,C)$ as
\[
Cox(S,G):=\bigoplus_{D\in {\rm Pic}(S),DC=0}H^{0}(S,\calO_S(D)),
\]
with a well-defined multiplication (see Section~\ref{introduction}).
\end{definition}

Notice that $Cox(S,G)$ is naturally graded by the degree defined in Definition~\ref{def-line-ruling-degree}.

\begin{remark}
 As usual, let $X_n$ be a del Pezzo surface of degree $9-n$
with $4\leq n\leq8$, let $S\rightarrow X_n$ be a blowup at a general
point, and $C$ be the corresponding exceptional curve. Then for the $E_{n}$-surface $(S,C)$, we have
\[
Cox(S,E_n)\cong\bigoplus_{D\in {\rm Pic}(X_{n})}H^0(X_n,\calO_{X_n}(D))=
Cox(X_n).
\]
Thus the definition of Cox rings of $E_n$-surfaces is the same as the classical definition of Cox rings for del Pezzo surfaces $X_n$.
The reason for the displayed isomorphism is that the contraction morphism $\pi:S\rightarrow X_n$ induces an isomorphism
$\pi^*:{\rm Pic}(X_n)\rightarrow C^{\perp}\subseteq {\rm Pic}(S)$ such that the pull-back of rational functions
$H^0(X_n,\mathcal{O}_{X_n}(D))\rightarrow H^0(S,\mathcal{O}_S(\pi^*D))$ is an isomorphism for any divisor $D$ of $X_n$.

\end{remark}

\begin{corollary}
1) For the $D_{n}$-surface $(S,C)$, $C\equiv f$ is a smooth
fiber. Then we have
\[
Cox(S, D_{n})=\bigoplus_{D\in {\rm Pic}(S),Df=0}H^0(S,\calO_S(D)).
\]

2) For the $A_{n}$-surface $(S,C)$, $C\equiv h$ (linear equivalence) is a twisted
cubic. Then we have
\[
Cox(S, A_{n})=\bigoplus_{D\in {\rm Pic}(S),Dh=0}H^0(S,\calO_S(D)).
\]

\end{corollary}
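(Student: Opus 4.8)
The plan is to deduce both identities directly from the classification in Corollary~\ref{surface-description} together with Definition~\ref{Cox-def}; the only genuine work is to pin down the divisor class of $C$ in each case. Recall that on a smooth rational surface the intersection form on ${\rm Pic}(S)$ is unimodular and ${\rm Pic}(S)$ is torsion-free, so a class is determined by its intersection numbers against a $\mathbb{Z}$-basis. Hence it suffices to identify $C$ numerically with $f$ (resp. $h$): once this is done, the index set $\{D\in{\rm Pic}(S): DC=0\}$ coincides with $\{D: Df=0\}$ (resp. $\{D: Dh=0\}$), and since the multiplication on $Cox(S,G)$ is the one induced from the function field, the two graded rings are literally equal, not merely isomorphic.

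For part (1), the $D_n$ case, Proposition~\ref{Classification-ADE-surfaces} gives $C^2=0$, and Corollary~\ref{surface-description}(ii) presents $S$ as a blowup of $\mathbb{P}^1\times\mathbb{P}^1$ or $\mathbb{F}_1$ at $n$ general points with $C$ the natural ruling. First I would observe that a general fiber of the projection $S\to\mathbb{P}^1$ avoids the blown-up points, so it is a smooth rational curve whose class is the pullback fiber class $f$; matching $f^2=0$ and $f\cdot K_S=-2$ confirms $C\equiv f$. Substituting $C\equiv f$ into Definition~\ref{Cox-def} replaces the condition $DC=0$ by $Df=0$ and yields the stated formula.

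For part (2), the $A_n$ case, Proposition~\ref{Classification-ADE-surfaces} gives $C^2=1$, and Corollary~\ref{surface-description}(iii) realizes $|C|$ as the birational morphism $\varphi_{|C|}:S\to\mathbb{P}^2$ contracting the exceptional curves, with $C$ representing the pullback line class; hence $C\equiv h$. Its anti-canonical degree is $C\cdot(-K_S)=h\cdot(3h-\sum_{i=1}^{n+1} l_i)=3$, so the anti-canonical image of $C$ is a nondegenerate rational cubic, which accounts for the description as a twisted cubic. Exactly as before, $C\equiv h$ turns the condition $DC=0$ into $Dh=0$ in Definition~\ref{Cox-def}, giving the second formula.

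The computations here are entirely routine; the only point requiring any care, and hence the main (mild) obstacle, is to confirm that the abstractly described ruling (resp. line) $C$ is the specific basis class $f$ (resp. $h$), rather than some other effective class sharing the same self-intersection. This is settled by the unimodularity of the intersection pairing on ${\rm Pic}(S)$ recorded above, which forces the identification once the numbers $C^2$, $C\cdot K_S$ and $C\cdot l_i=0$ have all been matched against the chosen basis.
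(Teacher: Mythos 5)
Your proposal is correct and takes essentially the same (immediate) route the paper intends: the corollary is just Definition~\ref{Cox-def} with the identifications $C\equiv f$ and $C\equiv h$ substituted in, and those identifications are already supplied by Corollary~\ref{surface-description}, exactly as you argue. One small caution: your closing claim that unimodularity pins down $C$ from the numbers $C^2$, $C\cdot K_S$ and $C\cdot l_i=0$ alone is not quite right --- on a blowup of $\mathbb{P}^1\times\mathbb{P}^1$ the section class $s$ has the same invariants ($s^2=0$, $s\cdot K_S=-2$, $s\cdot l_i=0$) --- but this is harmless, since your geometric identification of $C$ as a smooth fiber of the fixed ruling (resp.\ as the pullback of a line under $\varphi_{|C|}$) already determines the class without any appeal to the lattice.
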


\begin{theorem}\label{mainThm2}
\label{thm-generator}  Let $G=A_n, D_n (n\geq 3)$ or $E_n (4\leq n\leq 8)$.
The Cox ring $Cox(S,G)$ is
finitely generated, and generated by degree $1$ elements. For $G\neq E_8$,
the generators of $Cox(S,G)$ are global sections of invertible sheaves defined by
lines on $S$.  For $G=E_{8}$, we should add to the above set of generators
two linearly independent global sections of the anti-canonical sheaf on $X_{8}$.
\end{theorem}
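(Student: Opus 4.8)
The plan is to prove that $Cox(S,G)$ is generated in degree $1$; finite generation then follows immediately, since by Lemma~\ref{weight-set} the set of lines $I(S,C)$ is finite (it is the weight set of the fundamental representation attached to $\alpha_L$), each line contributes a one-dimensional space $H^0(S,\calO_S(l))$ (as recalled in the proof of Theorem~\ref{mainThm1}(3)), and for $G=E_8$ there are only two extra anti-canonical sections, so the whole ring is a quotient of a polynomial ring in finitely many variables. To establish generation in degree $1$ I would induct on the degree $d=(-K_S)\cdot D$ of a homogeneous piece $H^0(S,\calO_S(D))$ with $DC=0$. The degree-$0$ part is spanned by constants, and the degree-$1$ part consists exactly of the admissible generators (lines, together with $-K_S$ when $G=E_8$). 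For $d\ge 2$ the inductive step reduces to showing that the multiplication map
\[
\bigoplus_{l\in I(S,C),\ D-l\ \text{effective}} H^0(S,\calO_S(l))\otimes H^0(S,\calO_S(D-l))\longrightarrow H^0(S,\calO_S(D))
\]
is surjective, so that every section of $\calO_S(D)$ is a sum of products of strictly lower-degree sections.

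The first ingredient is a lattice-theoretic decomposition: every effective class $D$ with $DC=0$ and $\deg D\ge 2$ can be written as $D=l+D'$ with $l$ a line and $D'$ effective (necessarily $D'C=0$). Using the explicit blow-up descriptions of Corollary~\ref{surface-description} together with the root and weight lattice structure of Corollary~\ref{weight-lattice} and Lemma~\ref{weight-set}, this amounts to checking that the monoid of effective classes orthogonal to $C$ is generated by lines. The crucial input here is condition (i) of Definition~\ref{ADE-surface}, namely the absence of irreducible rational curves of self-intersection $\le -2$, which pins down the effective cone and in particular shows $-K_S$ to be nef. The only obstruction arises for $G=E_8$: the class $-K_S$ has $\deg(-K_S)=1$ but is not a line, and $-K_S-l$ is never effective, so $-K_S$ is an indecomposable generator whose two-dimensional space of sections must be adjoined. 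This is exactly the origin of the two anti-canonical generators in the statement.

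The second ingredient, which I expect to be the principal obstacle, is the cohomological surjectivity of the multiplication map. Fixing a line $l$ with $D-l$ effective, I would use the restriction sequence
\[
0\longrightarrow \calO_S(D-l)\longrightarrow \calO_S(D)\longrightarrow \calO_l(D)\longrightarrow 0,
\]
where $\calO_l(D)\cong \calO_{\bbP^1}(D\cdot l)$. Multiplication by the defining section $s_l\in H^0(S,\calO_S(l))$ identifies $H^0(S,\calO_S(D-l))$ with the space of sections of $\calO_S(D)$ vanishing along $l$, that is, with the image of the first arrow; hence surjectivity of the whole multiplication map is equivalent to showing that products of degree-$1$ sections restrict onto all of $H^0(\bbP^1,\calO_{\bbP^1}(D\cdot l))$. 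Two facts are required: first, the vanishing $H^1(S,\calO_S(D-l))=0$, which I would derive from Kawamata–Viehweg vanishing once $D-l$ is seen to be nef and big (again via the absence of $(-2)$-curves), so that the restriction $H^0(\calO_S(D))\to H^0(\calO_l(D))$ is already surjective; and second, a combinatorial argument producing enough further lines $l'$ meeting $l$ whose sections restrict to a spanning family on $l\cong\bbP^1$. The delicate points are the low-degree anomalies and the case-by-case verification across $A_n$, $D_n$ and $E_n$ that such configurations of lines exist and that no base-point phenomena obstruct the restriction; this is where most of the technical effort lies, and where the special behaviour of degree-$1$ del Pezzo surfaces in the $E_8$ case re-enters.

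Feeding the two ingredients into the induction yields that $Cox(S,G)$ is generated by the sections attached to lines, with the two anti-canonical sections added for $G=E_8$; as these generators are finite in number, the ring is finitely generated. For $G=E_n$ this recovers the results of Batyrev–Popov and Derenthal, while the same scheme applies uniformly to the $D_n$ and $A_n$ cases through the explicit descriptions of Section~\ref{subsection-surface}.
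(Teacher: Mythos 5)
Your overall architecture (induct on degree, decompose $D=l+D'$ in the effective monoid, prove surjectivity of multiplication via restriction sequences) is the Batyrev--Popov strategy, but the cohomological engine you propose breaks down exactly in the cases the paper actually has to prove. Your vanishing $H^1(S,\calO_S(D-l))=0$ is to come from Kawamata--Viehweg with $D-l$ nef and big; however, in the $A_n$ and $D_n$ cases \emph{no} class graded in $Cox(S,G)$ is ever big: every such $D$ satisfies $D\cdot h=0$ (resp.\ $D\cdot f=0$) with $h$ nef of positive square (resp.\ $f$ nef with $f^2=0$), so by the Hodge index theorem the intersection form on $C^{\perp}$ is negative (semi)definite and $D^2\leq 0$ throughout, in particular for $D-l$. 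Your auxiliary claim that condition (i) of Definition~\ref{ADE-surface} ``shows $-K_S$ to be nef'' is also false for large $n$: a $D_n$-surface (or $A_n$-surface) with $n\geq 9$ has $K_S^2=8-n<0$, while a nef class has nonnegative self-intersection. So for $A_n$ and $D_n$ the vanishing must be established by hand, and this is precisely what the paper does: it strips off $l_i$ and $f-l_j$ using restriction sequences with $H^0(\bbP^1,\calO_{\bbP^1}(-a_i))=0$, reduces to $D\equiv a_0F$, proves $H^1(S,\calO_S(a_0F))=0$ and $h^0(a_0F)=a_0+1$ by induction from $H^1(S,\calO_S)=0$, and then exhibits the generators explicitly as monomials $v_1^k v_2^{a_0-k}\prod_i x_i^{a_i}\prod_j y_j^{b_j}$ with $v_1=x_1y_1$, $v_2=x_2y_2$ (using the existence of two reducible fibers, i.e.\ $n\geq 2$); the $A_n$ case is the trivial polynomial ring $k[x_1,\dots,x_{n+1}]$.

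For $G=E_n$ your sketch defers precisely the hard content --- producing enough lines $l'$ whose sections span on $l\cong\bbP^1$, and the base-point pathology of $|-K|$ on the degree-one del Pezzo surface --- and these are not details but the substance of entire papers: the paper does not reprove them, it cites \cite{BP}, \cite{Der}, \cite{LV} and \cite{SS} (and \cite{TVV}, \cite{SS2} for $E_8$, where KV-type arguments are known to be insufficient). Your identification of $-K_S$ as the extra indecomposable degree-one generator in the $E_8$ case is correct and matches the statement, and your finiteness reduction via the finiteness of $I(S,C)$ is fine. But as written the proposal has a genuine gap on both fronts: the vanishing mechanism it relies on is inapplicable in the $A_n$/$D_n$ cases (where the paper argues by elementary explicit computation instead), and the $E_n$ core is asserted rather than proved.
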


\begin{proof}
Let $f,s,h,l_i$'s be as in Section 2.1 (Notations).

1) For the $G=E_{n}$ case, see \cite{BP}, \cite{Der}, \cite{LV} and \cite{SS}.

2) For the $G=D_{n}$ case, let $D\in {\rm Pic}(S)$ and $DF=0$.
Assume that $D$ is effective. Then we can write $D\equiv \sum a_{i}D_{i}$ (here '$\equiv$' means the linear equivalence) with $D_{i}$
irreducible curves and $a_{i}\geq 0$. Choose a smooth fiber $F$. Then
$D_{i}F\geq0$. Thus $DF=0$ implies $a_{i}=0$ or $D_{i}F=0$ for all $i$.
By the Hodge index theorem, $D_{i}F=0$ implies that $D_{i}\equiv F$ or $D_{i}=l_{j}$ or $D_{i}=f-l_{k}$, for some
$j,k$. Thus $D\equiv a_{0}F+\sum_{i} a_{i}l_{i}+\sum_{j} b_{j}(f-l_{j})$ with $a_0, a_{i}, b_{j}\geq0$.
Moreover, we can assume that $\{i\ |\ a_{i}\neq0\}\cap\{j\ |\ b_{j}\neq0\}=\emptyset$.

Let $x_{i}$ (resp. $y_{i}$) be a nonzero global section of $\calO_S(l_{i})$ (resp.
$\calO_S(f-l_{i})$).

Thus, by induction, we can show that
$$\mbox{dim}H^0(S,\mathcal{O}_S(D))=\mbox{dim}H^{0}(S, \mathcal{O}_S(a_{0}F))=a_{0}+1.$$ The proof goes as follows.
We have a short exact sequence
$$0\rightarrow \mathcal{O}_S(a_0F+(a_i-1)l_i)\rightarrow \mathcal{O}_S(a_0F+a_il_i)\rightarrow \mathcal{O}_{l_i}(a_0F+a_il_i)\rightarrow 0.$$
Note that $\mathcal{O}_{l_i}(a_0F+a_il_i)\cong \mathcal{O}_{\mathbb{P}^1}(-a_i)$, since $l_i\cong\mathbb{P}^1$ and $l_i(a_0F+a_il_i)=-a_i$.
Thus we have a long exact sequence
\begin{eqnarray*}
0 &\rightarrow& H^0(S, \mathcal{O}_S(a_0F+(a_i-1)l_i))\rightarrow H^0(S, \mathcal{O}_S(a_0F+a_il_i))\rightarrow H^0(\mathbb{P}^1, \mathcal{O}_{\mathbb{P}^1}(-a_i))\\
&\rightarrow& H^1(S, \mathcal{O}_S(a_0F+(a_i-1)l_i))\rightarrow\cdots.
\end{eqnarray*}

When $a_i\geq 1$, $H^0(\mathbb{P}^1, \mathcal{O}_{\mathbb{P}^1}(-a_i))=0$, and therefore
$$H^0(S, \mathcal{O}_S(a_0F+(a_i-1)l_i))\cong H^0(S, \mathcal{O}_S(a_0F+a_il_i)).$$
Hence by induction we have $$H^0(S, \mathcal{O}_S(a_0F))\cong H^0(S, \mathcal{O}_S(a_0F+a_il_i)).$$
By repeating this process, we have $$H^0(S, \mathcal{O}_S(D))\cong H^0(S, \mathcal{O}_S(a_0F)).$$
It remains to prove $\mbox{dim}H^{0}(S, \mathcal{O}_S(a_{0}F)))=a_{0}+1$. Also this comes from the following short exact sequence:
$$0\rightarrow \mathcal{O}_S((a_0-1)F)\rightarrow \mathcal{O}_S(a_0F)\rightarrow \mathcal{O}_{F}(a_0F)\rightarrow 0.$$
Here $\mathcal{O}_{F}(a_0F)\cong \mathcal{O}_{\mathbb{P}^1}$, since $F\cong\mathbb{P}^1$ and $(a_0F)F=0$.
Taking the long exact sequence, we have
\begin{eqnarray*}
0 &\rightarrow& H^0(S, \mathcal{O}_S((a_0-1)F))\rightarrow H^0(S, \mathcal{O}_S(a_0F))\rightarrow H^0(\mathbb{P}^1, \mathcal{O}_{\mathbb{P}^1})\\
&\rightarrow& H^1(S, \mathcal{O}_S((a_0-1)F))\rightarrow H^1(S, \mathcal{O}_S(a_0F))\rightarrow H^1(\mathbb{P}^1, \mathcal{O}_{\mathbb{P}^1})\rightarrow\cdots.
\end{eqnarray*}

Since $H^1(\mathbb{P}^1, \mathcal{O}_{\mathbb{P}^1})=0$, we shall have $H^1(S, \mathcal{O}_S(a_0F))=0$ if $H^1(S, \mathcal{O}_S((a_0-1)F))=0$.
For $a_0=1$, we have $H^1(S, \mathcal{O}_S((a_0-1)F))=H^1(S, \mathcal{O}_S)=0$, since $S$ is a rational surface. Thus by induction,
we have for all $a_0\geq 0$, $H^1(S, \mathcal{O}_S(a_0F))=0$. Then from the last long exact sequence we have
\begin{eqnarray*}
\mbox{dim} H^0(S, \mathcal{O}_S(a_0F))&=&\mbox{dim} H^0(S, \mathcal{O}_S((a_0-1)F))+\mbox{dim} H^0(\mathbb{P}^1, \mathcal{O}_{\mathbb{P}^1})\\
&=&\mbox{dim} H^0(S, \mathcal{O}_S((a_0-1)F))+1.
\end{eqnarray*}
Therefore by induction, we have $$\mbox{dim} H^0(S, \mathcal{O}_S(a_0F))=a_0+1.$$

Let $H^0(S,\mathcal{O}_S(F))=\mathbb{C}\langle v_{1},v_{2}\rangle$, where $v_1,v_2$ are two linearly independent global sections of $\mathcal{O}_S(F)$.
Then the linearly independent generators of $H^0(S,\mathcal{O}_S(D))$ can be taken as
$u_{k}(\Pi_{i} x_{i}^{a_{i}})(\Pi_{j}
y_{j}^{b_{j}})$, where $u_{k}=v_{1}^{k}v_{2}^{a_{0}-k},k=0,\cdots,a_{0}$,

Let $n\geq2$. Thus we have at least two different singular fibers:
$l_{1}+(f-l_{1})$ and $l_{2}+(f-l_{2})$. Then $x_{1}y_{1}$ and $x_{2}y_{2}$
are linearly independent elements in $H^0(S,\calO_S(F))$. Thus we can take
$v_{1}=x_{1}y_{1},v_{2}=x_{2}y_{2}$.

Therefore, the Cox ring is generated by global sections of the invertible
sheaves defined by lines (when $n\geq2$).

In fact, if $(x)=F'$ is a smooth fiber, then we must have
\[
x=a(x_{1}y_{1})+b(x_{2}y_{2}),
\]
with $a\neq0$ and $b\neq0$.

3) For the $G=A_{n}$ case, let $D\in {\rm Pic}(S)$, such that $Dh=0$. Then
obviously, $D\equiv a_{1}l_{1}+\cdots+a_{n+1}l_{n+1}$. $D\geq0$ if and only if
$a_{i}\geq0$. Let $x_i\neq 0$ be a global section of $\calO_S(l_i), 1\leq i\leq n+1$.
Note that
\[
\mbox{dim}H^0(S,\calO_S(a_{1}l_{1}+\cdots+a_{n+1}l_{n+1}))=1,
\]
and $x_{1}^{a_{1}}\cdots x_{n+1}^{a_{n+1}}$ generates the one-dimensional vector space $H^{0}%
(S, \calO_S(a_{1}l_{1}+\cdots+a_{n+1}l_{n+1}))$.
By Definition~\ref{def-line-ruling-degree},
$${\rm deg}(x_1^{a_1}\cdots x_{n+1}^{a_{n+1}}):=D(-K_S)=a_0+\cdots+a_{n+1}.$$
Thus, the Cox ring is in fact a polynomial ring with $n+1$ variables:
\[
Cox(S, A_{n})=k[x_{1},\cdots,x_{n+1}].
\]

\end{proof}

By this theorem, the Cox ring $Cox(S,G)$ of a $G$-surface $S$ is
a quotient of the polynomial ring $P(S,G)=k[x_{1},\cdots,x_{N_{G}}]$ by an
ideal $\mathcal{I}(S,G)$:
\[
Cox(S,G)=k[x_{1},\cdots,x_{N_{G}}]/\mathcal{I}(S,G),
\]
where $N_{G}$ is the number of lines (Definition~\ref{def-line-ruling-degree}) in the $G$-surface $S$ for $G\neq E_8$;
for $G= E_8$, $N_{G}$ is the number of lines plus 8.

\begin{theorem}\label{mainThm3}
For any $ADE$-surface $S$, the ideal $\mathcal{I}(S,G)$ is
generated by quadrics.
\end{theorem}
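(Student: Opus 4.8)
The plan is to reduce the statement for all $ADE$-surfaces to the already-established case $G=E_n$ and to the geometrically transparent cases $G=A_n,D_n$, exploiting the uniform structure provided by Theorem~\ref{mainThm1} and Theorem~\ref{mainThm2}. The key observation is that Theorem~\ref{mainThm2} identifies the degree $1$ generators of $Cox(S,G)$ with (essentially) the sections $s_\mu$ attached to the lines $\mu\in I(S,C)$, and these are exactly the weight coordinates $x_\mu$ on the fundamental representation $V_{l_n}$ appearing in Theorem~\ref{mainThm1}(3). Thus the polynomial ring $P(S,G)=k[x_1,\dots,x_{N_G}]$ surjects onto $Cox(S,G)$, and the same variables $x_\mu$ are coordinates on $H^0(S,\calL_G)$ (or on the subspace $V$ when $G=E_8$). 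This dictionary is what lets us transport the quadratic defining equations of $G/P$ into relations among the generators of $Cox(S,G)$.

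First I would treat the two easy cases. For $G=A_n$ the proof of Theorem~\ref{mainThm2} already showed $Cox(S,A_n)=k[x_1,\dots,x_{n+1}]$ is a free polynomial ring, so $\calI(S,A_n)=0$ and there is nothing to prove (the empty set of quadrics generates the zero ideal). For $G=D_n$, the generators are the $x_i$ (sections of $\calO_S(l_i)$) and the $y_i$ (sections of $\calO_S(f-l_i)$), and the relations come from the linear equivalences $l_i+(f-l_i)\equiv F$ among the singular fibres: any two products $x_iy_i$ and $x_jy_j$ both lie in the two-dimensional space $H^0(S,\calO_S(F))$, so there is a linear relation expressing a third such product $x_ky_k$ as a linear combination $a\,x_iy_i+b\,x_jy_j$. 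These are manifestly quadratic, and I would argue that they generate $\calI(S,D_n)$ by a degree/dimension count: in each multidegree $D$ with $DC=0$ the monomials in the $x_i,y_i$ span $H^0(S,\calO_S(D))$, and the only coincidences among them are consequences of the fibre relations, so the kernel is generated in the degree where those relations first appear, namely degree $2$.

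For the main case $G=E_n$ with $4\le n\le 8$, I would invoke the results cited in the introduction: for $E_6,E_7$ this is Derenthal~\cite{Der} and for $E_8$ it is Serganova--Skorobogatov~\cite{SS2}, while $E_4,E_5$ (and indeed $n\le5$) reduce to smaller classical groups where the statement is elementary or follows from~\cite{BP},~\cite{LV}. Rather than reprove these, the plan is to present a conceptual uniform argument that makes the quadratic generation transparent via Theorem~\ref{mainThm1}: the composite $\mathcal{Q}\colon\calL_G\to \mathrm{Sym}^2\calL_G\to\calW$ defines, fiberwise, the cone $C(G/P)=Q^{-1}(0)$ by quadratic equations, and by Theorem~\ref{mainThm1}(3) the same quadrics $f(x_\mu)$ cut out $C(G/P)$ inside $H^0(S,\calL_G)$ (resp. $V$ for $E_8$). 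Since $Proj(Cox(S,E_n))\hookrightarrow G/P$ and, crucially, $Spec(Cox(S,E_n))\hookrightarrow C(G/P)$ as closed subschemes of the same ambient space cut by these quadrics, the ideal $\calI(S,E_n)$ contains the quadrics of $G/P$; I would then argue that these quadrics already generate, because both the Cox ring and the coordinate ring of $C(G/P)$ are generated in degree $1$ with the same degree-$1$ piece, and the extra relations needed to pass from $C(G/P)$ to $Spec(Cox(S,E_n))$ (coming from the non-generic blow-up geometry) are themselves quadratic.

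The hard part will be the last point: justifying that no relations of degree $\ge 3$ are needed. For $A_n$ and $D_n$ this is a clean dimension count, but for $E_n$ (especially $E_7,E_8$) the quadratic generation of $\calI(S,E_n)$ is exactly the substance of the Batyrev--Popov conjecture, whose proof in the literature relied on nontrivial representation-theoretic input and, in the $E_6,E_7$ cases, on computer verification~\cite{Der}. Consequently the honest plan is to \emph{cite} quadratic generation of $\calI(S,E_n)$ from~\cite{BP},~\cite{Der},~\cite{SS2} rather than to reprove it, and to supply full self-contained proofs only for the new cases $G=A_n$ and $G=D_n$, where the fibration structure of $S\to\bbP^1$ makes the argument elementary. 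The uniform statement of the theorem is then obtained by assembling these three cases, with the $E_n$ case quoted and the $A_n,D_n$ cases proved as above.
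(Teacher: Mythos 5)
Your proposal is correct and takes essentially the same route as the paper: the $E_n$ cases are quoted from \cite{Der} and \cite{SS2} (the paper additionally cites \cite{TVV} for $n=8$), the $A_n$ case is the zero ideal, and the $D_n$ case rests on exactly the paper's observation that all products $x_iy_i$ lie in the two-dimensional space $H^0(S,\mathcal{O}_S(f))$, so any two are linearly independent and any three linearly dependent, with the explicit monomial spanning sets from the proof of Theorem~\ref{mainThm2} ruling out relations of degree $\geq 3$. Your closing dimension-count justification for the $D_n$ case is precisely the paper's appeal to the proof of Theorem~\ref{thm-generator}, so there is nothing substantive to add.
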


\begin{proof}
1) For $G=A_{n}$, the ideal $\mathcal{I}(S,G)=0$.

2) For $G=E_{n}$, see \cite{Der} for $4\leq n\leq7$ and \cite{SS2}, \cite{TVV} for $n=8$.

3) For $G=D_{n}$, let $x_i,y_i$ be as in the proof of Theorem~\ref{mainThm2}.
We want to show that
\[
Cox(S,D_n)=k[x_{1},y_{1},\cdots,x_{n},y_{n}]/\mathcal{I}(S,D_{n}),
\]
where
\[
\mathcal{I}(S,D_{n})=(a_{31}x_{1}y_{1}+a_{32}x_{2}y_{2}+a_{33}x_{3}%
y_{3},\cdots,a_{n1}x_{1}y_{1}+a_{n2}x_{2}y_{2}+a_{n3}x_{n}y_{n}),
\]
and all $a_{ij}\neq0$.

By the proof of Theorem~\ref{thm-generator}, we see that all the generating
relations come from the ruling $f$. The vector space
$H^{0}(S,\calO_S(f))$ is a two-dimensional space. Moreover, any two
singular fibers are different. Therefore, when $n\geq3$, any two elements of $\{x_{1}%
y_{1},\cdots,x_{n}y_{n}\}$ are linearly independent, and any three elements are
linearly dependent. Thus, the ideal $\mathcal{I}(S,D_{n})$ is of desired form.
\end{proof}

\subsection{Cox rings and flag varieties}

Let $G$ be a complex simple Lie groups, and $\lambda$ be a
fundamental weight. Let $P$ be the corresponding maximal parabolic
subgroup and $V_{\lambda}$ be the highest weight module. It is well known that
the homogeneous space $G/P$ (the orbit of the highest weight vector of
$V_{\lambda}$) could be embedded into the projective space
$\mathbb{P}(V_{\lambda})$ with quadratic relations as generating relations.
 It is  showed in \cite{Der} that
for the del Pezzo surfaces $X_n$ with $n=6,7$,
\[
Spec(Cox(X_n))\hookrightarrow C(E_{n}/P).
\]
 Here $P$ is the maximal parabolic
subgroup determined by the left-end node $\alpha_{L}$ in the Dynkin diagram
(Figure 1), and $C(E_{n}/P)$ is the affine cone over the homogeneous space
$E_{n}/P$.

  Given an $ADE$-surface $S$, we let $\mathcal{L}_{G}$ be the representation bundle determined by lines on $S$.
  The vector space of global sections
$H^0(S,\mathcal{L}_{G})$ is the  fundamental representation of $G$ associated
with the node $\alpha_{L}$ (for $G=E_{8}$ we should replace it by a subspace $V$).

The following result relates the Cox ring of a $G$-surface with the homogeneous
variety $G/P$. Thus we obtain a uniform description of
the relation between the Cox rings $Cox(S,G)$,  the homogeneous space $G/P$,
and fundamental representation bundles defined by lines in $S$,  for any Lie
group $G=A_n, D_n, E_n$.

\begin{theorem}\label{mainThm4}
Let $G=A_n, D_n$ or $E_n$. Let $S$, $\mathcal{L}_{G}$ and $P$ be as above.
 We have an embedding: $Proj(Cox(S,G))\hookrightarrow G/P$.
\end{theorem}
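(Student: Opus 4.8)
The plan is to deduce the statement by placing both $Proj(Cox(S,G))$ and $G/P$ inside the \emph{same} projective space and comparing their homogeneous ideals. First I would invoke Theorem~\ref{mainThm2}: since $Cox(S,G)$ is generated by its degree-one part $Cox(S,G)_1=\bigoplus_{\mu\in I(S,C)}H^0(S,\calO_S(\mu))=H^0(S,\calL_G)$ (with the evident modification replacing $H^0(S,\calL_G)$ by the $248$-dimensional subspace $V$ when $G=E_8$), the inclusion of the generators gives a surjection of graded rings $k[x_\mu]=Sym^\bullet H^0(S,\calL_G)\twoheadrightarrow Cox(S,G)$ with kernel $\calI(S,G)$, hence a closed embedding $Proj(Cox(S,G))\hookrightarrow\bbP(H^0(S,\calL_G))$ in which the coordinate $x_\mu$ is the section $s_\mu$ of the line $\mu$. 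On the other side, Theorem~\ref{mainThm1} realizes $G/P\hookrightarrow\bbP(H^0(S,\calL_G))$ as the zero locus of the quadratic form $\calQ$, and by \cite{Li} its homogeneous ideal $J$ is generated by the components of $\calQ$. Using these common coordinates $x_\mu$, the assertion $Proj(Cox(S,G))\hookrightarrow G/P$ is then equivalent to the inclusion of ideals $J\subseteq\calI(S,G)$.

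Next I would reduce this inclusion to degree two. The ideal $J$ is generated by quadrics (\cite{Li}) and, by Theorem~\ref{mainThm3}, so is $\calI(S,G)$; hence it suffices to show $J_2\subseteq\calI(S,G)_2$ inside $Sym^2 H^0(S,\calL_G)$. Concretely, each generator of $J$ lives in a single multidegree $\eta\in\Pi(\calW)$ and has the shape $F_\eta=\sum_{\mu+\nu=\eta}c_{\mu\nu}\,x_\mu x_\nu$, where the $c_{\mu\nu}$ are the structure constants of the projection $Sym^2 H^0(S,\calL_G)=\calW\oplus\calV_{2l_n}\to\calW$ on the weight-$\eta$ space. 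Such an $F_\eta$ lies in $\calI(S,G)$ exactly when it is killed by the multiplication map $Sym^2 H^0(S,\calL_G)\to Cox(S,G)_2$, i.e. when
\[
\sum_{\mu+\nu=\eta}c_{\mu\nu}\,s_\mu s_\nu=0\quad\text{in } H^0(S,\calO_S(\eta)).
\]
Equivalently, I must prove that multiplication of sections annihilates the summand $\calW$ of $Sym^2\calL_G$, so that it factors through the complementary summand $\calV_{2l_n}$.

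I would then treat the three families. For $G=A_n$ this is immediate: $\calW=0$ by Theorem~\ref{mainThm1}, so $J=0$, and in fact $Cox(S,A_n)=k[x_1,\dots,x_{n+1}]$ yields $Proj(Cox(S,A_n))=\bbP^n=G/P$. For $G=E_n$ the inclusion $J\subseteq\calI(S,G)$ is precisely the theorem of Batyrev--Popov and Derenthal (\cite{BP},\cite{Der}) for $4\le n\le 7$ and of Serganova--Skorobogatov (\cite{SS2}) for $n=8$, which I would simply cite. The genuinely new case is $G=D_n$, where $\calW=\calO_S(f)$ carries a single quadric, namely the $SO(2n)$-invariant form $q=\sum_i c_i\,x_{l_i}x_{f-l_i}$ on the vector representation $H^0(S,\calL_G)$; here the task collapses to the single identity $\sum_i c_i\,s_{l_i}s_{f-l_i}=0$ in the two-dimensional space $H^0(S,\calO_S(f))$.

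The main obstacle is exactly this $D_n$ identity. Each product $s_{l_i}s_{f-l_i}$ is a section of $\calO_S(f)$ cutting out the reducible fibre $l_i+(f-l_i)$ over one of the $n$ blown-up points, so it is a nonzero multiple $\lambda_i\ell_i$ of the section $\ell_i\in H^0(S,\calO_S(f))$ vanishing on that fibre; since $H^0(S,\calO_S(f))$ is two-dimensional (as in the proof of Theorem~\ref{mainThm2}), there is an $(n-2)$-dimensional space of linear dependences among the $s_{l_i}s_{f-l_i}$, and the point is to show that the invariant combination $\sum_i c_i\,s_{l_i}s_{f-l_i}$ is one of them. The subtlety is that the coefficients $c_i$ are fixed by the $Spin(2n)$-module structure on $H^0(S,\calL_G)$ and cannot be rescaled at will, so matching them against the linear dependences among the fibre-sections $\ell_i$ forces one to use the explicit construction of $\calL_G$ and its $G$-action from \cite{LZ1}, which is built from the configuration of curves on $S$ in such a way that the invariant form is adapted to the ruling. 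I expect verifying this compatibility---equivalently, that the representation-theoretic projection $\calQ$ agrees with geometric multiplication onto $\calW$---to be the crux; once it is established, $J_2\subseteq\calI(S,G)_2$ follows, and with it the desired embedding $Proj(Cox(S,G))\hookrightarrow G/P$.
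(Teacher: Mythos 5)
Your global strategy---placing both spaces in $\mathbb{P}(H^0(S,\mathcal{L}_G))$, reducing the embedding to an inclusion of quadratically generated homogeneous ideals, citing \cite{BP}, \cite{Der}, \cite{SS2} for $E_n$, and disposing of $A_n$ trivially---is exactly the paper's, and your reduction of the $D_n$ case to the single identity $\sum_i c_i\, s_{l_i}s_{f-l_i}=0$ in the two-dimensional space $H^0(S,\mathcal{O}_S(f))$ targets the right statement. But you stop precisely at the crux: you never prove that identity, only ``expect'' it, so as written the proposal does not establish the theorem in the one genuinely new case ($D_n$).

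Moreover, the obstacle you flag---that the coefficients $c_i$ are ``fixed by the $Spin(2n)$-module structure and cannot be rescaled at will''---is not a real one, and seeing through it is how the paper closes the argument. The identification of $H^0(S,\mathcal{L}_G)$ with $V_{l_n}$ is canonical only up to independently rescaling the one-dimensional weight spaces (a diagonal torus action preserving the weight decomposition), and under such rescaling any quadric $\sum_i c_i x_i y_i$ with all $c_i\neq 0$ becomes the standard invariant quadric $\sum_i x_i y_i$ of Lemma~\ref{quadric-equation}. So you do not need the multiplication map to annihilate $\mathcal{W}$ for one fixed normalization; it suffices to exhibit \emph{some} quadric of the shape $\sum_i c_i x_i y_i$ with every $c_i\neq 0$ inside $\mathcal{I}(S,D_n)$. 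The paper gets this directly from the presentation in Theorem~\ref{mainThm3}: the generators are $a_{i1}x_1y_1+a_{i2}x_2y_2+a_{i3}x_iy_i$ for $3\le i\le n$ with all $a_{ij}\neq 0$, and a generic combination $\sum_{i\ge 3} b_i\bigl(a_{i1}x_1y_1+a_{i2}x_2y_2+a_{i3}x_iy_i\bigr)$ with all $b_i\neq 0$ has every coefficient of $x_1y_1,\dots,x_ny_n$ nonzero (one only has to avoid the two hyperplanes where the $x_1y_1$- or $x_2y_2$-coefficient vanishes---the paper's ``dimension counting''). Rescaling coordinates then yields a surjection $k[x_1,y_1,\dots,x_n,y_n]/(\textstyle\sum_i x_iy_i)\twoheadrightarrow Cox(S,D_n)$, i.e.\ the closed embedding $Spec(Cox(S,G))\hookrightarrow C(D_n/P)$, which is the theorem. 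Note that your own observation that the relation space among the $s_{l_i}s_{f-l_i}$ is $(n-2)$-dimensional already contains the needed input: for $n\ge 3$ no coordinate functional vanishes identically on it, so a generic relation has all coefficients nonzero, and the scaling freedom does the rest; insisting instead on the fixed representation-theoretic normalization of the $c_i$ is what makes your final step look harder than it is.
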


\begin{proof}
For $G=E_{n}$ and $4\leq n\leq7$, the result is known, see \cite{BP} and \cite{Der}.
For $G=E_8$, see \cite{SS2}.

For $G=A_{n}$, in fact we have an isomorphism: $Proj(Cox(S,G))\cong A_{n}/P
\cong \mathbb{P}^{n}$. For $G=D_{n}$, by the proof of Theorem~\ref{mainThm3},
\[
Cox(S,D_{n})=k[x_{1},y_{1},\cdots,x_{n},y_{n}]/\mathcal{I}(S,D_{n}),
\]
where
\[
\mathcal{I}(S,D_{n})=(a_{31}x_{1}y_{1}+a_{32}x_{2}y_{2}+a_{33}x_{3}%
y_{3},\cdots,a_{n1}x_{1}y_{1}+a_{n2}x_{2}y_{2}+a_{n3}x_{n}y_{n}),
\]
and all $a_{ij}\neq0$. By Lemma~\ref{quadric-equation}, the affine coordinate ring of $C(D_{n}/P)$ is
\[
k[x_{1},y_{1},\cdots,x_{n},y_{n}]/(x_{1}y_{1}+\cdots+x_{n}y_{n}).
\]

There exist nonzero $b_{i},3\leq i\leq n$, such that the coefficient of the
term $x_{i}y_{i}$ in the sum $\sum_{3\leq i\leq n}b_{i}(a_{i1}x_{1}%
y_{1}+a_{i2}x_{2}y_{2}+a_{i3}x_{i}y_{i})$ is nonzero, by dimension counting.

Therefore, we have a surjective homomorphism from the affine coordinate ring
of $C(G/P)$ to that of $Spec(Cox(S,G))$, which defines a closed embedding
\[
Spec(Cox(S,G))\hookrightarrow C(G/P).
\]

\end{proof}

The following result is well-known. But since we can not find an appropriate reference, we include its proof here.

\begin{lemma}\label{quadric-equation}
Let $V_{l_n}=\bigoplus_{\mu\in I(S,G)}V_{(\mu)}$ be the irreducible representation of $D_n=SO(2n,\mathbb{C})$ associated with the highest weight $l_n$.
Let $P$ be the maximal hyperbolic subgroup of $D_n$ associated with the highest weight $l_n$.\par

Then there exists a basis $\{u_i,v_i|i=1,\cdots,n\}$ for $V_{l_n}$ such that $D_n/P\subseteq \mathbb{P}(V_{l_n})$ is defined by the quadratic equation $Q(x_1,y_1,\cdots,x_n,y_n)=\sum_{i=1}^{n} x_iy_i=0$.
\end{lemma}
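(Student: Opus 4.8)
The plan is to recognize $V_{l_n}$ concretely as the defining (vector) representation $\mathbb{C}^{2n}$ of $D_n=SO(2n,\mathbb{C})$, and then to identify the highest-weight orbit $D_n/P$ with the projective quadric cut out by the invariant form. First I would note that in the Dynkin diagram of $D_n$ (Figure 2) the node $\alpha_L=\alpha_n$ is the end node farthest from the fork $\{\alpha_1,\alpha_2\}$, and the fundamental representation attached to this node is the standard representation of $SO(2n,\mathbb{C})$, of dimension $2n$. This matches the count $\dim V_{l_n}=|I(S,C)|=2n$ coming from Lemma~\ref{weight-set}, so $V_{l_n}$ is indeed the vector representation and carries a nondegenerate $G$-invariant quadratic form $Q$, which is unique up to scalar since $V_{l_n}$ is irreducible.

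Next I would produce the required basis by taking a Witt decomposition of $(V_{l_n},Q)$. Since $Q$ is nondegenerate on a $2n$-dimensional space, there is a splitting $V_{l_n}=U\oplus U'$ into two maximal isotropic subspaces $U=\langle u_1,\dots,u_n\rangle$ and $U'=\langle v_1,\dots,v_n\rangle$, with the associated bilinear form pairing them by $\langle u_i,v_j\rangle=\delta_{ij}$. Writing a general vector as $\sum_i x_iu_i+\sum_i y_iv_i$, the quadratic form becomes exactly $Q=\sum_{i=1}^n x_iy_i$, which is the basis and normal form asserted in the statement.

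It then remains to show that $D_n/P$, realized as the orbit of the highest weight vector $v_{l_n}$ in $\mathbb{P}(V_{l_n})$, is precisely the zero locus $\{Q=0\}$. Since the invariant form pairs the weight space $V_{\mu}$ with $V_{-\mu}$, it vanishes on $v_{l_n}$ unless $l_n=0$; hence $v_{l_n}$ spans an isotropic line. By Witt's theorem $SO(2n,\mathbb{C})$ acts transitively on the set of all isotropic lines in $V_{l_n}$, and the stabilizer of one such line is exactly the maximal parabolic $P$ attached to $\alpha_n$. Hence the orbit map identifies $D_n/P$ with the set of isotropic lines, that is, with the projective quadric defined by $Q(x_1,y_1,\dots,x_n,y_n)=\sum_{i=1}^n x_iy_i=0$ inside $\mathbb{P}(V_{l_n})$.

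The hard part will be the bookkeeping in the first and third steps: matching the paper's nonstandard labeling $\alpha_L=\alpha_n$ (with the fork at $\alpha_1,\alpha_2$) to the classical identification of this node with the vector representation, rather than with one of the two half-spin representations, and then checking carefully that the highest weight line is isotropic and that its $SO(2n,\mathbb{C})$-orbit is the whole quadric (equivalently, that the isotropic lines form a single orbit, in contrast to the two families of maximal isotropic subspaces). Both facts are standard — Witt's theorem, together with the description of maximal parabolics as stabilizers of isotropic flags — so the essential content of the lemma is the normal form $Q=\sum x_iy_i$ furnished by the Witt decomposition.
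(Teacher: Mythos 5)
Your proof is correct, but it takes a genuinely different route from the paper's. The paper works directly with the weight basis $\{u_i,v_i\}$ (weights $l_i$ and $f-l_i$), defines $Q$ abstractly as the composite $V_{l_n}\rightarrow {Sym}^2V_{l_n}=V_{2l_n}\oplus W\rightarrow W$, cites Lichtenstein \cite{Li} for the fact that $D_n/P$ is the zero locus of this map, and then pins down $Q=\sum x_iy_i$ by exhibiting your $Q'$ as a $D_n$-invariant form and invoking complete reducibility to conclude $W\cong\mathbb{C}$ and $Q=Q'$; notably, it never needs to know that the quadric is a single orbit. You instead bypass \cite{Li} and the ${Sym}^2$ decomposition entirely: you identify $V_{l_n}$ as the vector representation (your node-matching and dimension count $|I(S,C)|=2n$ are right, including the caveat about the half-spin nodes at the fork), put the invariant form in normal form via a Witt decomposition, check the highest weight line is isotropic since the form pairs $V_\mu$ with $V_{-\mu}$ and $2l_n\neq 0$, and use Witt's theorem to see the highest weight orbit is the full quadric with stabilizer $P$. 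Your argument is more self-contained and classical, and yields the extra fact that the quadric is a single closed $SO(2n,\mathbb{C})$-orbit; the paper's argument has the advantage of running on exactly the same $Q:V\rightarrow{Sym}^2V\rightarrow W$ machinery used uniformly for all $ADE$ cases in Theorem~\ref{mainThm1}, and of delivering the scheme-theoretic statement directly from \cite{Li}. One point worth adding to your write-up: the paper later uses (in Proposition~\ref{equivariant}) that the coordinates $x_i,y_i$ come from weight vectors, and a general Witt basis need not be one; but your own observation that the invariant form pairs $V_\mu$ with $V_{-\mu}$ (here $f-l_i\equiv -l_i$ since $f\equiv C\equiv 0$ in the weight lattice) shows the two maximal isotropic subspaces can be taken to be $\bigoplus_i V_{(l_i)}$ and $\bigoplus_i V_{(f-l_i)}$ with $v_i$ rescaled so that $\langle u_i,v_j\rangle=\delta_{ij}$, so your Witt basis can indeed be chosen to be a weight basis, recovering everything the paper needs downstream.
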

\begin{proof}
By \cite{LZ1}, $\Pi(l_n)=I(S,G)=\{l_i,f-l_i|i=1,\cdots,n\}$.
Hence we can take a basis for $V_{l_n}$ as $\{u_{i}, v_{i}|i=1,\cdots,n\}$, where for $1\leq i\leq n$, $u_{i}$ (resp. $v_{i}$) is the basis for the one-dimensional weight space with the weight $l_i$ (resp. $f-l_i$).

As in the beginning of Section~\ref{quadratic-form}, let $Q$ be the composite map $$Q: V_{l_n}\rightarrow {Sym}^2 V_{l_n}=V_{2l_n}\bigoplus W\rightarrow W,$$
where $W\cong\mathbb{C}$ is the trivial representation. By \cite{Li}, $D_n/P\subseteq \mathbb{P}(V_{l_n})$ is defined by the equation $Q=0$.
We only need to write down $Q$ explicitly.

   The following map
   \begin{eqnarray*}
     Q': V_{l_n}=\mathbb{C}^{2n}\langle u_{i}, v_{i}|1\leq i\leq n\rangle &\rightarrow& \mathbb{C}\\
         \sum (x_iu_{i}+y_iv_{i})&\mapsto& \sum x_i y_i
   \end{eqnarray*}
 defines a non-degenerate symmetric quadratic form which is $D_n$-invariant.
 (In fact, $D_n=SO(2n,\mathbb{C})$ is the Lie group preserving this non-degenerate symmetric quadratic form with determinant one.)
 Therefore by the Complete Reducibility Theorem for semi-simple Lie groups, $\mathbb{C}$ is a summand of the $D_n$-module ${Sym}^2 V_{l_n}$.
 Hence $W\cong \mathbb{C}$ and $Q=Q'$.
\end{proof}

\subsection{Cox rings and the GIT quotients}\label{section-GIT}

Let $(S,C)$ be a $G$-surface. The subset $C^{\perp}$ of ${\rm Pic}(S)$ is
a free abelian group of rank equal to $rank({\rm Pic}(S))-1$.
   We have the following short exact sequence:
$$0\rightarrow \mathbb{Z}C\rightarrow {\rm Pic}(S)\rightarrow {\rm Pic}(S)/\mathbb{Z}C\rightarrow 0.$$
   Taking the dual, we have
$$1\rightarrow Hom({\rm Pic}(S)/\mathbb{Z}C,\mathbb{C}^*)\rightarrow T_{NS}\rightarrow \mathbb{C}^*\rightarrow 1.$$
We denote the torus $Hom({\rm Pic}(S)/\mathbb{Z}C,\mathbb{C}^*)$ by $T_{S,G}$.
Note that, for $G=E_n$, $T_{S,G}$ is exactly the N\'{e}ron-Severi torus $T_{NS}$ of the del Pezzo surface $X_n$ obtained by blowing down $C$ from $S$.

   The torus $T_{S,G}$ is an extension of $\mathbb{C}^*$ by a maximal torus $T_G$ of $G$.
   One can see that the lattice $\langle C,K_S\rangle^{\perp}$ is a sublattice of $C^{\perp}$
   of rank equal to $rank(C^{\perp})-1$. In fact we have a short exact sequence:
$$0\rightarrow \mathbb{Z}K_S\rightarrow {\rm Pic}(S)/\mathbb{Z}C \rightarrow {\rm Pic}(S)/(\mathbb{Z}C+\mathbb{Z}K_S)\rightarrow 0.$$
Since the character group $\chi(T_G)$ of $T_G$ is isomorphic to the weight lattice ${\rm Pic}(S)/(\mathbb{Z}C+\mathbb{Z}K_S)$
(if we take $G$ to be the simply connected one),
we have $T_G\cong Hom({\rm Pic}(S)/(\mathbb{Z}C+\mathbb{Z}K_S),\mathbb{C}^*)$, by Corollary~\ref{weight-lattice}.
Therefore the following sequence is exact:
$$1\rightarrow T_G\rightarrow T_{S,G}\rightarrow \mathbb{C}^*\rightarrow 1.$$

The torus $T_{S,G}$ acts on $Cox(S,G)$ (and therefore acts on $Spec(Cox(S,G))$) naturally.

\begin{proposition}\label{equivariant}
The embeddings $$Proj(Cox(S,G))\hookrightarrow G/P\hookrightarrow \mathbb{P}(V_{l_n})$$
arising in Theorem~\ref{mainThm4} are $T_{G}$-equivariant.
\end{proposition}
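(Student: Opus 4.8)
The plan is to reduce the statement to a comparison of two torus actions on the ambient space $V_{l_n}=H^0(S,\mathcal{L}_G)$ (or on the subspace $V$ when $G=E_8$), and to check that they agree under the identification of the degree-one generators of $Cox(S,G)$ with the weight vectors of the representation. First I would record how $T_G$ acts on $V_{l_n}$. Since $V_{l_n}$ is the irreducible $G$-module with highest weight $l_n$, the maximal torus $T_G$ acts diagonally with respect to the weight decomposition $V_{l_n}=\bigoplus_{\mu\in\Pi(l_n)}V_{(\mu)}$ of Lemma~\ref{weight-set}, scaling the one-dimensional weight space $V_{(\mu)}$ by the character $\mu$, viewed as an element of the weight lattice ${\rm Pic}(S)/(\mathbb{Z}C+\mathbb{Z}K_S)$ via Corollary~\ref{weight-lattice}. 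Under the identification of $V_{(\mu)}$ with the line spanned by the section $s_\mu\in H^0(S,\mathcal{O}_S(\mu))$, this says that $T_G$ scales $s_\mu$ by the weight $[\mu]$.

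Next I would compute the $T_{S,G}$-action on the same generators and restrict it to $T_G$. By construction $T_{S,G}=Hom({\rm Pic}(S)/\mathbb{Z}C,\mathbb{C}^*)$ acts on the degree-$D$ graded piece $H^0(S,\mathcal{O}_S(D))$ of $Cox(S,G)$ through the character $[D]\in{\rm Pic}(S)/\mathbb{Z}C$, so in particular it scales the degree-one generator $s_\mu$ by $[\mu]$. Restricting along the inclusion $T_G\hookrightarrow T_{S,G}$ coming from the exact sequence $1\rightarrow T_G\rightarrow T_{S,G}\rightarrow\mathbb{C}^*\rightarrow 1$ — which is dual to $0\rightarrow\mathbb{Z}K_S\rightarrow{\rm Pic}(S)/\mathbb{Z}C\rightarrow{\rm Pic}(S)/(\mathbb{Z}C+\mathbb{Z}K_S)\rightarrow 0$ — amounts to reducing the character $[\mu]$ further modulo $K_S$. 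This yields precisely the weight $[\mu]\in{\rm Pic}(S)/(\mathbb{Z}C+\mathbb{Z}K_S)$ by which $T_G$ acts in the first step. Hence the linear map sending each generator $x_\mu$ to the weight vector $s_\mu$ is $T_G$-equivariant, and this is exactly the map defining the affine embedding $Spec(Cox(S,G))\hookrightarrow V_{l_n}$ whose image lands in $C(G/P)$ by Theorem~\ref{mainThm4}.

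The equivariance of both displayed embeddings then follows formally: the inclusion $Spec(Cox(S,G))\hookrightarrow C(G/P)\hookrightarrow V_{l_n}$ intertwines the $T_G$-actions because it is induced by the $T_G$-equivariant identification of generators, and since the overall scaling $\mathbb{C}^*$ commutes with $T_G$, passing to $Proj$ produces the $T_G$-equivariant chain $Proj(Cox(S,G))\hookrightarrow G/P\hookrightarrow\mathbb{P}(V_{l_n})$. The one point that requires genuine care is the case $G=E_8$, where one works with the subspace $V$ rather than all of $H^0(S,\mathcal{L}_G)$: here $-K_S$ is the zero weight of $V_{l_8}$ occurring with multiplicity eight, and I must verify that the chosen summand $\mathbb{C}\langle s_K\rangle^{\bigoplus 8}$ is $T_G$-stable with the correct weight. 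This is immediate, since $-K_S$ maps to $0$ in ${\rm Pic}(S)/(\mathbb{Z}C+\mathbb{Z}K_S)$, so $T_G$ acts trivially on the whole zero-weight space, matching the trivial action expected on the multiplicity-eight zero weight; thus the weight-matching argument extends uniformly to $E_8$.
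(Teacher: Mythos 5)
Your proposal is correct, and it is a genuinely more uniform route than the paper's, which argues case by case: for $G=E_n$ the paper simply imports equivariance from \cite{BP}, \cite{Der}, \cite{SS} (and \cite{SS2} for $E_8$); for $G=D_n$ it observes that the coordinates $\{x_i,y_i\}$ of Lemma~\ref{quadric-equation} are weight vectors, so $T_G$ acts as scalars on each coordinate; and for $G=A_{n-1}$ equivariance is trivial since $Proj(Cox(S,G))\cong G/P\cong\mathbb{P}(V_{l_n})$. Your single character computation --- that both the $T_G$-action on the weight space $V_{(\mu)}\subseteq V_{l_n}$ and the restriction to $T_G$ of the natural $T_{S,G}$-action on the generator $s_\mu$ of $Cox(S,G)$ are given by the same class $[\mu]\in{\rm Pic}(S)/(\mathbb{Z}C+\mathbb{Z}K_S)$, the restriction being reduction modulo $K_S$ dual to $1\to T_G\to T_{S,G}\to\mathbb{C}^*\to 1$ --- is exactly the paper's $D_n$ argument made lattice-theoretic, but it covers all types at once, and your explicit check on the eight-dimensional zero-weight space for $E_8$ (where $[-K_S]=0$ forces the trivial $T_G$-action on both sides, so any identification there is automatically equivariant) is a point the paper leaves entirely to the references. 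Two caveats keep your argument honest rather than fully independent. First, for $G=E_n$ your reduction presupposes that the embedding of Theorem~\ref{mainThm4} matches degree-one generators with weight vectors weight-by-weight; this is indeed how the cited constructions (and the coordinates fixed in Theorem~\ref{mainThm1}(3)) are set up, so you are making the equivariance implicit in \cite{BP}, \cite{Der}, \cite{SS2} explicit rather than reproving those embeddings. Second, for $G=D_n$ your ``natural'' grading of $Cox(S,G)$ by the image of $D$ in ${\rm Pic}(S)/\mathbb{Z}C$ --- under which $y_i$ has $T_G$-weight $[f-l_i]=-[l_i]$, matching the weight of $v_i$ in Lemma~\ref{quadric-equation} --- is the correct one for this proposition; it differs from the shifted grading $f\mapsto s$ used in the proof of Proposition~\ref{GIT-quotient}, and your choice is the one consistent with the paper's own proof here, so it is worth stating that choice explicitly as you do.
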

\begin{proof}
This is known for $G=E_n$ by \cite{BP}, \cite{Der} and \cite{SS}.\par

For $G=D_n$, since our coordinate system $\{x_i,y_i|i=1,\cdots,n\}$ is chosen by the weight vectors (see Lemma~\ref{quadric-equation}), $T_G$ acts on these spaces as scalars on each coordinate.
According to Lemma~\ref{quadric-equation} and the proof of Theorem~\ref{mainThm4}, these embeddings are $T_G$-equivariant.

For $G=A_{n-1}$, it is trivial, since $Proj(Cox(S,G))\cong G/P\cong \mathbb{P}(V_{l_n})$ and $T_G$ acts on these spaces as scalars on each coordinate.

\end{proof}

In the $E_n$ case, by Hu-Keel (\cite{HK}), the GIT quotient of $Spec(Cox(X_n))$ by $T_{NS}$ is exactly the surface
$X_n$. In general, we have

\begin{proposition}\label{GIT-quotient}
Let $X_n$ be the del Pezzo surface obtained from an $E_n$-surface $S$ by blowing down $C$.
The GIT quotient of $Spec(Cox(S,G))$ by the action of $T_{S,G}$ is respectively $X_n$ for $G=E_n$,
$\mathbb{P}^1$ for $G=D_n$,
and a point for $G=A_n$.
\end{proposition}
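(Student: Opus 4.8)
The plan is to treat the three types separately, organizing everything around the exact sequence $1\to T_G\to T_{S,G}\to\mathbb C^*\to 1$ of Section~\ref{section-GIT} together with the projective variety $Proj(Cox(S,G))\subset G/P$ produced in Theorem~\ref{mainThm4}. For $G=E_n$ there is nothing new: $T_{S,E_n}$ is the N\'eron--Severi torus $T_{NS}$ and $Cox(S,E_n)\cong Cox(X_n)$, so the assertion is precisely the Hu--Keel theorem that the GIT quotient of $Spec(Cox(X_n))$ by $T_{NS}$, with an ample linearization, recovers $X_n$ (\cite{HK}); I would simply cite this.

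For $G=A_n$ the answer is immediate from Theorem~\ref{mainThm2}. There $Cox(S,A_n)=k[x_1,\dots,x_{n+1}]$ is a polynomial ring, so $Spec(Cox(S,A_n))=\mathbb A^{n+1}$, and $T_{S,A_n}\cong(\mathbb C^*)^{n+1}$ acts diagonally with the weights of $x_1,\dots,x_{n+1}$ forming a basis of its character lattice ${\rm Pic}(S)/\mathbb Z h$. Hence $T_{S,A_n}$ acts with a dense orbit; equivalently the $\chi$-semiinvariant ring for $\chi=\sum\bar l_i$ is the one-variable ring $k[x_1\cdots x_{n+1}]$. Either way the quotient is a single point.

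The substantive case is $G=D_n$. Using the presentation $Cox(S,D_n)=k[x_1,y_1,\dots,x_n,y_n]/\mathcal I(S,D_n)$ of Theorem~\ref{mainThm3}, together with the fact (Lemma~\ref{quadric-equation}, Proposition~\ref{equivariant}) that $x_i,y_i$ are $T_{D_n}$-weight vectors of opposite weights $\pm\bar l_i$ with the $\bar l_i$ linearly independent, the ring of $T_{D_n}$-invariants is generated by the products $u_i:=x_iy_i$. Each $u_i$ is a global section of $\mathcal O_S\big(l_i+(f-l_i)\big)=\mathcal O_S(f)=\mathcal O_S(C)$, and the defining relations $a_{i1}x_1y_1+a_{i2}x_2y_2+a_{i3}x_iy_i$ ($3\le i\le n$) express $u_3,\dots,u_n$ as linear combinations of $u_1,u_2$. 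Thus the invariant ring is the polynomial ring $k[u_1,u_2]=\bigoplus_{m\ge0}H^0(S,\mathcal O_S(mf))$, the section ring of the ruling $|f|\colon S\to\mathbb P^1$; the dimension count $\dim H^0(S,\mathcal O_S(mf))=m+1$ from the proof of Theorem~\ref{mainThm2} confirms $u_1,u_2$ are algebraically independent. Taking $Proj$ with respect to the degree grading (in which $\deg u_1=\deg u_2=2$) then yields $\mathbb P^1$, the base of the ruling.

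The step I expect to be the main obstacle is the precise GIT bookkeeping for $D_n$. Because $C^2=0$, the class $C=f$ is isotropic and lies in $C^\perp$, so the intersection pairing between the grading group $C^\perp$ of $Cox(S,D_n)$ and the character lattice ${\rm Pic}(S)/\mathbb Z C$ of $T_{S,G}$ is degenerate: a one-dimensional subtorus of $T_{S,G}$ acts trivially, and the anticanonical $\mathbb C^*$ needed to form $Proj$ is not contained in $T_{S,G}$. One must therefore phrase the quotient projectively — as the quotient of $Proj(Cox(S,D_n))\subset D_n/P$ by the residual maximal torus $T_{D_n}$, equivalently as $Proj$ of the invariant ring above — in order to obtain a one-dimensional quotient rather than a spurious two-dimensional one. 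For $G=E_n$ and $G=A_n$ this issue is absent, since $C^2=\mp1$ makes $C$ unimodular and splits ${\rm Pic}(S)=\mathbb Z C\oplus C^\perp$ orthogonally, so $T_{S,G}$ already coincides with the Cox torus $Hom(C^\perp,\mathbb C^*)$ and the two formulations agree; this is exactly what lets those cases reduce cleanly to Hu--Keel and to a dense-orbit count.
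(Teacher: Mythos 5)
Your $E_n$ case (citing Hu--Keel) and your $A_n$ case (dense orbit, or the semiinvariant ring $k[x_1\cdots x_{n+1}]$) agree with the paper, and your $D_n$ computation of the ring $k[u_1,u_2]\cong\bigoplus_{m\geq 0}H^0(S,\mathcal{O}_S(mf))$ with $Proj=\mathbb{P}^1$ is exactly the ring the paper's proof produces. The gap is in your ``GIT bookkeeping'' paragraph, which rests on a false claim and as a consequence proves a reformulated statement rather than the one asserted. The intersection pairing $C^{\perp}\times {\rm Pic}(S)/\mathbb{Z}C\rightarrow \mathbb{Z}$ is \emph{not} degenerate in the $D_n$ case: in the bases $\{f,l_1,\dots,l_n\}$ of $f^{\perp}$ and $\{\bar{s},\bar{l}_1,\dots,\bar{l}_n\}$ of ${\rm Pic}(S)/\mathbb{Z}f$ its matrix is $\mathrm{diag}(1,-1,\dots,-1)$, since $s\cdot f=1$ and $s\cdot l_i=0$; it is unimodular. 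What fails is only the injectivity of the composite $C^{\perp}\hookrightarrow {\rm Pic}(S)\rightarrow {\rm Pic}(S)/\mathbb{Z}C$, i.e.\ the weights obtained by restricting the $T_{NS}$-action to the subtorus $T_{S,G}$. The paper does not use that restricted action: it grades $R=Cox(S,D_n)$ by ${\rm Pic}(S)/\mathbb{Z}f=\chi(T_{S,G})$ via the unimodular pairing, sending $a_0f+\sum_i a_il_i$ to $a_0\bar{s}+\sum_i a_i\bar{l}_i$, so that $T_{S,G}$ acts effectively, $R^{T_{S,G}}=\mathbb{C}$, and the GIT quotient of $Spec(Cox(S,D_n))$ for the linearization $v=s\in\chi(T_{S,G})_{\mathbb{Q}}$ is
\[
Proj\Bigl(\bigoplus_{m\geq 0}R_{m\bar{s}}\Bigr)=Proj\Bigl(\bigoplus_{m\geq 0}H^0(S,\mathcal{O}_S(mf))\Bigr)=\mathbb{P}^1,
\]
with no ``spurious two-dimensional'' quotient and no need to replace the quotient of $Spec(Cox(S,G))$ by $T_{S,G}$ with a quotient of $Proj(Cox(S,G))$ by $T_{D_n}$.

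That said, your substitute computation lands on the correct ring for a structural reason: since $T_{S,G}/T_{G}\cong\mathbb{C}^*$ acts on your invariant ring $R^{T_{D_n}}=k[u_1,u_2]$ with weight equal to the $u$-degree, your $T_{D_n}$-invariant ring coincides with the paper's $\bar{s}$-semiinvariant ring $\bigoplus_m R_{m\bar s}$, so your argument is salvageable with one extra line identifying the two. But as written you (i) assert a degeneracy that does not hold, and (ii) establish a statement about $Proj(Cox(S,D_n))$ modulo $T_{D_n}$ rather than the GIT quotient of $Spec(Cox(S,G))$ by $T_{S,G}$ that Proposition~\ref{GIT-quotient} claims. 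To repair, either adopt the paper's pairing-induced grading and linearization $v=s$ as above (this is the ``linearization argument as in Hu--Keel'' \cite{HK} the paper runs, and it also covers $A_n$ uniformly), or prove explicitly that your invariant ring equals the relevant semiinvariant ring before taking $Proj$.
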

\begin{proof}
For the case $G=E_n$, we can apply the result of Hu-Keel (Proposition 2.9 in \cite{HK}),
since $Cox(S,E_n)\cong Cox(X_n)$ is finitely generated by \cite{BP}, and since $T_{NS}(X_n)\cong T_{S,G}$.

It remains to prove the cases $G=D_n$ and $G=A_n$.

For $G=D_n$, we apply a linearization argument as in Hu-Keel (\cite{HK}).
In this case ${\rm Pic}(S)=\mathbb{Z}\langle f,s,l_1,\cdots,l_n\rangle$ and $C\equiv f$. Let $R=Cox(S,D_n)$.
Note that $R$ is naturally graded by the lattice ${\rm Pic}(S)/\mathbb{Z}f$.
For example, $a_0f+\sum_{i=1}^{n}a_i l_i\in f^{\perp}$ with $a_i\in \mathbb{Z}$ is graded by
$a_0s+ \sum_{i=1}^{n}a_i l_i\in {\rm Pic}(S)/\mathbb{Z}f$.
Then $R=\bigoplus_{v\in {\rm Pic}(S)/\mathbb{Z}f}R_v$. According to Theorem~\ref{thm-generator}, $R$ is finitely generated.
Note that $T_{S,G}$ acts naturally on $R$. So $R=\bigoplus_{v\in {\rm Pic}(S)/\mathbb{Z}f=\chi(T_{S,G})}R_v$
is the eigenspace decomposition for this action. Thus
$$H^0(Spec(Cox(S,G)), L_v)^{T_{S,G}}=R_v,$$ where $L_v$ is the line bundle determined by the
linearization $v\in {\rm Pic}(S)$. And the ring of invariants is
$$R(Spec(Cox(S,G)),L_v)^{T_{S,G}}=R(S,\calO_S(v')),$$
where $v'\in f^{\perp}$ is graded by $v$, and $R(S,\calO_S(v'))$ (similar for $R(Spec(Cox(S,G)),L_v)$) denotes the graded ring $\bigoplus_{n\geq 0}H^0(S,\calO_S(nv'))$.
(This notation is taken from \cite{HK}.)
Thus $\mathbb{P}^1\cong Proj(R(S,\calO_S(f)))$ is the GIT quotient for the
linearization $v=s\in \chi(T_{S,G})_{\mathbb{Q}}$.

The proof for $G=A_n$ is similar.

\end{proof}

\section*{Appendix: two non-simple but semisimple cases}

Note that $G=E_{3}=A_{2}\times A_{1}$ and $G=D_{2}=A_{1}\times A_{1}$ are not
simple, but semisimple. For completeness, in these two cases, we define $G$-surfaces $(S,C)$ and the Cox rings $Cox(S,G)$
similarly as in Corollary~\ref{surface-description} and Definition~\ref{Cox-def}, and we compute briefly the
coordinate rings of the $G/P$ and the Cox rings explicitly. It turns out that there is
no embedding of $Spec(Cox(S,G))$ into $C(G/P)$.

\subsection*{(1) The case $G=E_{3}$}

Let $(S,C)$ be an $E_3$-surface, that is, $S$ is a blowup of a del Pezzo surface $X_{3}$ of degree $6$ at a general point, and $C$ is the exceptional curve.
Note that $X_{3}$ is a blowup of $\mathbb{P}^{2}$ at $3$ points in general position. The representation bundle
$\mathcal{L}_{E_3}$ is the tensor product of the standard representation bundles
of $A_{2}$ and $A_{1}$.

Precisely, $\mathcal{L}_{E_3}=\calV_{2}\otimes \calV_{1}$ where $\calV_{i} (i=1,2)$ is the standard
representation of $A_{i}$. By checking the highest weights, it is easy to see that

\begin{itemize}
\item[i)] $\mathcal{L}_{E_3}$ is determined by the set of $-1$ curves
$$\{l_{1},l_{2},l_{3},h-l_{1}-l_{2},h-l_{1}-l_{3},h-l_{2}-l_{3}\};$$

\item[ii)] $\calV_{2}^{*}$ is determined by $\{h-l_{1},h-l_{2},h-l_{3}\}$
and $\calV_{2}$ is determined by $\{-(h-l_{1}),-(h-l_{2}),-(h-l_{3})\}$;

\item[iii)] $\calV_{1}$ is determined by $\{h,2h-l_{1}-l_{2}-l_{3}\}$.
\end{itemize}

Note that $\calO_S(h)\bigoplus\calO_S(2h-l_{1}-l_{2}-l_{3})$ is a
standard representation bundle of the adjoint principal bundle $\mathscr{A}%
_{1}:=\calO_S\bigoplus\calO_S(\alpha_{1})\bigoplus\calO_S(-\alpha_{1})$
(recall that $\alpha_1=-h+l_1+l_2+l_3$).

The $Cox(S,E_{3})$ is defined as in Definition~\ref{Cox-def}. Then $Cox(S,E_{3})\cong Cox(X_{3})$,
and it is well-known that $Cox(X_{3})\cong \mathbb{C}[y_{1},\cdots,y_{6}]$, since
$X_{3}$ is toric (\cite{Cox}). Therefore
\[
Proj(Cox(S,E_3))\cong\mathbb{P}(H^{0}(S,\mathcal{L}_{E_3}))=\mathbb{P}^{5}.
\]
And $E_{3}/P=\mathbb{P}^{2}\times\mathbb{P}^{1}$. Denote $V=H^{0}(S,\mathcal{L}_{E_3})$.

Then we have
\[
Proj(Cox(S,E_3))=\mathbb{P}(V)\text{ (}\cong\mathbb{P}^{5}\text{),
}\mbox{ and }E_{3}/P=\mathbb{P}^{2}\times\mathbb{P}^{1}\hookrightarrow
\mathbb{P}(V),
\]
where the embedding $G/P\hookrightarrow\mathbb{P}(H^{0}(S,\mathcal{L}%
_{E_3}))=\mathbb{P}^{5}$ corresponds to the Segre embedding $\mathbb{P}%
^{2}\times\mathbb{P}^{1}\hookrightarrow\mathbb{P}^{5}$.

\subsection*{(2) The case $G=D_{2}$}

Let $(S,C)$ be a $D_{2}$-surface, that is, $S$ is a blowup of the ruled surface $\mathbb{P}^1\times\mathbb{P}^1$ or $\mathbb{F}_1$ at two points in general position, and $C$ is a smooth fiber.
The rank $4$ representation bundle
$\mathcal{L}_{D_{2}}$ is the tensor product of the standard representation
bundles of $A_{1}$ (recall that $D_{2}=A_{1}\times A_{1}$).

Note that $\mathcal{L}_{D_2}=\calO_S(l_{1})\bigoplus\calO_S%
(l_{2})\bigoplus\calO_S(f-l_{1})\bigoplus\calO_S(f-l_{2})$.
Let $\mathcal{V}_{1}:=\calO_S(l_{1}-s)\bigoplus\calO_S(l_{2}-s)$ and
$\mathcal{V}_{2}:=\calO_S(s)\bigoplus\calO_S(s+f-l_{1}-l_{2})$. Then we
find that $\mathcal{L}_{D_2}=\mathcal{V}_{1}\otimes\mathcal{V}_{2}$.
By checking the highest weights, we see that $\mathcal{V}_{1}, \mathcal{V}_{2}$
are the corresponding standard representations.

Thus we have $G/P\cong\mathbb{P}^{1}\times\mathbb{P}^{1}$. And the embedding
\[
G/P\hookrightarrow\mathbb{P}(H^{0}(S,\mathcal{L}_{D_2}))\cong\mathbb{P}^{3}%
\]
corresponds to the Segre embedding $\mathbb{P}^{1}\times\mathbb{P}%
^{1}\hookrightarrow\mathbb{P}^{3}$. On the other hand, the Cox ring
$Cox(S,D_{2})$, defined as in Definition~\ref{Cox-def}, is a sub-ring of $Cox(X_{3})$ generated by degree $1$
elements in $Cox(X_{3})$, since $S$ is also a del Pezzo surface $X_{3}$.
That $Cox(S,D_{2})$ is a sub-ring of $Cox(X_{3})$ follows directly from their definitions.
Therefore, we have
$Cox(S, D_2)=\mathbb{C}[x_{1},\cdots,x_{4}]$, and hence
\[
Proj(Cox(S,D_{2}))\cong\mathbb{P}(H^{0}(S,\mathcal{L}_{D_{2}}%
))\cong\mathbb{P}^{3}.
\]

\end{document}